\renewcommand{\MR}[1]{}
\numberwithin{equation}{section}
\newcommand\CC{\mathbb{C}}
\newcommand\HH{\mathbb{H}}
\newcommand\NN{\mathbb{N}}
\newcommand\QQ{\mathbb{Q}}
\newcommand\RR{\mathbb{R}}
\newcommand\ZZ{\mathbb{Z}}
\newcommand\eps{\varepsilon}
\newcommand\vol{\mathrm{vol}}
\newcommand\PSL{\mathrm{PSL}}
\theoremstyle{plain}
\newtheorem{thm}{Theorem}[section]
\newtheorem{theorem}[thm]{Theorem}
\newtheorem{lemma}[thm]{Lemma}
\newtheorem{corollary}[thm]{Corollary}
\newtheorem{prop}[thm]{Proposition}
\theoremstyle{definition}
\newtheorem{definition}[thm]{Definition}
\newtheorem{remark}[thm]{Remark}
\begin{document}

\title[Almost Periodic Functions and Hyperbolic Counting]{Almost Periodic Functions and\\ Hyperbolic Counting}

\author{Giacomo Cherubini}

\address{
         Dipartimento di Matematica, Universit\`a di Genova,
         Via Dodecaneso 35, 16146 Genoa Italy
        }
\email{cherubini@dima.unige.it}
\keywords{Hyperbolic lattice points, Almost periodic functions}
\subjclass[2010]{Primary 11F72, 11P21, 42.30}

\begin{abstract}
We prove the existence of asymptotic moments and an estimate on the tails
of the limiting distribution for a specific class of almost periodic functions.
Then we introduce the hyperbolic circle problem, proving an estimate on the
asymptotic variance of the remainder that improves a result of Chamizo.
Applying the results of the first part we prove the existence of
limiting distribution and asymptotic moments for three functions
that are integrated versions of the remainder,
and were considered originally (with due adaptations to our settings)
by Wolfe, Phillips and Rudnick, and Hill and Parnovski.
\end{abstract}

\maketitle

%%%%%%%%%%%%%%%%%%%%%%%%%%%%%%%%%%%%%%%%%%%%%%%%%%%%%%%%%%%%%%%%%%%%%%%%%%%%%%%%%

\section{Introduction}

A classical problem in analytic number theory is that of determining
if a given function, arising from a number theoretical question,
admits a limiting distribution,
and, similarly, if it admits finite asymptotic moments.
As an example, consider the summatory von Mangoldt function
\begin{equation}\psi(x)=\sum_{1\leq n\leq x} \Lambda(n).\end{equation}                 
It is an old result of Wintner \cite{wintner_asymptotic_1935}
that, under the assumption of the Riemann hypothesis, the normalized remainder
\begin{equation}\label{apf01:eq006}
q(y) = \frac{\psi(e^y)-e^y}{e^{y/2}}
\end{equation}
admits finite asymptotic moments of every order, and it has a limiting distribution.
(the finiteness of the second moment had already been proved by Cram\'er \cite{cramer_mittelwertsatz_1922}).
Similarly, let $R(x)$ and $D(x)$ be the counting functions in the
Gauss circle problem and the Dirichlet divisor problem, namely
\begin{equation}
R(x) = \sum_{1\leq n\leq x} r(n),\qquad
D(x) = \sum_{1\leq n\leq x} d(n),
\end{equation}
where $r(n)$ is the number of ways of writing $n$ as
a sum of two squares, and $d(n)$ the number of divisors of $n$.
Then it is known that the normalized remainders
\begin{equation}
u(y)      = \frac{R(y^2)-\pi y^2}{y^{1/2}},\qquad
v(y) = \frac{D(y^2)-(y^2\log y^2 -(2C-1)y^2)}{y^{1/2}}
\end{equation}
admit asymptotic moments of order $1\leq k\leq 9$, and a limiting distribution
(here $C$ is Euler's constant).
The finiteness of the second moment is again a result of Cram\'er \cite{cramer_uber_1922},
the third and fourth moments can be found in the work of Tsang \cite{tsang_higher-power_1992},
while the existence of the moments of order up to nine, and of the limiting distribution,
was proved by Heath-Brown \cite{heath-brown_distribution_1991,heath-brown_dirichlet_1993}.

This paper is inspired by a recent article by
Akbary, Ng, and Shahabi~\cite{akbary_limiting_2014},
where the authors prove the existence of limiting distribution for a class
of almost periodic functions.
Here we prove a sufficient condition for the existence
of asymptotic moments for a similar class of functions,
complementing in this way Theorem 1.2 in \cite{akbary_limiting_2014}.

We have in mind applications to counting problems
in the hyperbolic plane (see \S\ref{apf01:subsection01}),
where also inputs from spectral theory of automorphic forms are needed.
These problems turn out to be much harder than their classical analogues,
due to the fact that we cannot exploit the spectral side
in such an effective way as done in the classical setting.
One of the reasons is that the spectrum of the euclidean Laplacian
is completely explicit, but we do not know the exact location of the spectrum
of the hyperbolic Laplacian. This complicates the proofs,
and prevents us from proving results as good as in the classical case
(at least with the same methods).
Since the functions that appear in these problems share many similarities
with more general almost periodic functions,
it seems appropriate to prove the existence of limiting distribution
and asymptotic moments in such a general framework.

The remainders in the prime number theorem,
the Gauss circle problem, and the Dirichlet divisor problem,
are suitable examples to describe the class of functions
that we consider in Definition \ref{apf_01:def001} below.
Let $q(y)$ be the normalized remainder in the prime number theorem,
as defined in \eqref{apf01:eq006}.
Assuming the Riemann hypothesis, there exist $y_0,X_0>0$ such that we can write
(see e.g. chapter 17 in \cite{davenport_multiplicative_2000}),
for $y\geq y_0$ and $X\geq X_0$,
\begin{equation}\label{apf01:eq001}
q(y)
=
-2\Re\bigg(\sum_{0<\gamma\leq X}\frac{1}{\rho}e^{i\gamma y}\bigg)
+
O\left(\frac{e^{y/2}\log(e^yX)}{X} + ye^{-y/2}\right)
\end{equation}
where $\rho=1/2+i\gamma$ are the non-trivial zeros of the Riemann zeta function.
For the Gauss circle problem (the Dirichlet divisor problem is similar),
one can show that there exist $y_0,X_0>0$ such that for $y\geq y_0$ and $X\geq X_0$ we can write
\begin{equation}\label{apf01:eq002}
u(y)
=
2\Re\bigg(\sum_{1\leq n\leq X} \frac{r(n)e^{-i\pi/4}}{2\pi\sqrt{2}\,n^{3/4}}e^{4\pi iy\sqrt{n}}\bigg)
+
O\left(\frac{X^\eps}{y^{1/2}}+\frac{y^\eps}{X^{1/2}}\right),
\end{equation}
For a reference see e.g. the book of Titchmarsh \cite[(12.4.4)]{titchmarsh_theory_1986}
or that of Ivi\'c \cite[Ch. 13]{ivic_riemann_1985}).

Identities \eqref{apf01:eq001} and \eqref{apf01:eq002} show that we can approximate
the functions $q$ and $u$
by finite linear combinations of complex exponentials. The coefficients
and the frequencies of such linear combinations depend on the problem.

%%%%%%%%%%%%%%%%%%%%%%%%%%%%%%%%%%%%%%%%%%%%%%%%%%%%%%%%%%%%
%%%   DEFINITION OF (P,BETA)-FUNCTION                    %%%
\begin{definition}\label{apf_01:def001}
Let $p\in\RR$, $p\geq 1$, and let $\phi:[0,\infty)\to\RR$.
We say that $\phi$ is a \emph{$p$-function} if there exist a strictly increasing
sequence $\{\lambda_n\}$ of positive real numbers tending to infinity,
a sequence of complex numbers $\{r_n\}$, and numbers $y_0,X_0>0$, such that
$\phi\in L^p([0,y_0])$, and for $y\geq y_0$, $X\geq X_0$, we have
\begin{equation}\label{0308:eq001}
    \phi(y)
    =
    2\Re\Big(\sum_{0<\lambda_n\leq X} r_n e^{i\lambda_n y}\Big)
    +
    \mathcal{E}(y,X),
\end{equation}
where $\mathcal{E}(y,X)$ satisfies
\begin{equation}\label{ch0301:eq003}
    \lim_{Y\to\infty} \frac{1}{Y}\int_{y_0}^Y |\mathcal{E}(y,X(Y))|^pdy = 0
\end{equation}
for some non-decreasing function $X(Y)$ tending to infinity.
We say moreover that $\phi$ is a \emph{$(p,\beta)$-function} if $\phi$
is a $p$-function, and there exists $\beta\in\RR$ such that
\begin{equation}\label{ch0301:eq004}
    \sum_{T\leq \lambda_n\leq T+1} |r_n| \ll \frac{1}{T^\beta}\quad\text{for }T\gg 1.
\end{equation}
\end{definition}
%%%%%%%%%%%%%%%%%%%%%%%%%%%%%%%%%%%%%%%%%%%%%%%%%%%%%%%%%%%%

\begin{remark}
It follows from \cite[Th. 1.2]{akbary_limiting_2014}
that a $(p,\beta)$-function $\phi$ such that $p\geq 2$ and $\beta>1/2$
is a $B^2$-almost periodic function.
Definition \ref{apf_01:def001} is slightly more general,
as we can consider $1\leq p<2$ and $\beta\leq 1/2$.
However, in Theorem \ref{ch0304:thm01}, Theorem \ref{ch0303:thm01},
and  Corollary \ref{apf01:cor01} below, one should keep in mind that
the results concern almost periodic functions as soon as $p\geq 2$ and $\beta>1/2$.
\end{remark}

The following Theorem \ref{ch0304:thm01} gives a sufficient condition to ensure that
a $(p,\beta)$-function admits finite asymptotic moments.
The condition
\begin{equation}\label{ap:eq005}
   \limsup_{Y\to\infty} \sup_{y\in[y_0,Y]} |\mathcal{E}(y,X(Y))| = 0
\end{equation}
is required to cover some cases where \eqref{ch0301:eq003} does not suffice.
Observe that \eqref{ap:eq005} implies \eqref{ch0301:eq003},
for any $p\geq 1$, since we have
\begin{equation}
   \frac{1}{Y}\int_{y_0}^Y |\mathcal{E}(y,X(Y))|^pdy
   \leq
   \sup_{y\in[y_0,Y]} |\mathcal{E}(y,X(Y))|^p.
\end{equation}

%%%%%%%%%%%%%%%%%%%%%%%%%%%%%%%%%%%%%%%%%%%%%%%%%%%%%%%%%%%%%%%%%%%
%%%  THEOREM ON MOMENTS OF A (P,BETA)-FUNCTION                  %%%
\begin{theorem}\label{ch0304:thm01}
Let $p\in\NN$, and let
$\phi$ be a $(p,\beta)$-function with $\beta>1-1/p$.
If $p$~is odd and greater than $1$, assume that \eqref{ap:eq005} holds.
Then the asymptotic moment
\begin{equation}\label{ch0304:eq062}
    L_n=\lim_{Y\to\infty}\frac{1}{Y}\int_0^Y \phi(y)^n dy\nonumber
\end{equation}
exists, for every $1\leq n\leq p$.
\end{theorem}
%%%                                                             %%%
%%%%%%%%%%%%%%%%%%%%%%%%%%%%%%%%%%%%%%%%%%%%%%%%%%%%%%%%%%%%%%%%%%%

\begin{remark}
It is easy to see that the function $q(y)$ in \eqref{apf01:eq001} (under RH), and the function
$u(y)$ in \eqref{apf01:eq002}, are $p$-functions for every $p\geq 1$ (by choosing
$X(Y)=e^Y$ in the first case, and $X(Y)=Y$ in the second).
Because of the asymptotic formula for $N(T)$, the number of non-trivial zeros
of the Riemann zeta function with imaginary part in $(0,T]$, we have
(see Davenport \cite[p. 59]{davenport_multiplicative_2000})
\[
N(T) = \frac{T}{2\pi} \log\frac{T}{2\pi} - \frac{T}{2\pi} + O(\log T),
\]
and in view of the standard asymptotic formula
\[
\sum_{1\leq n\leq x} r(n) = \pi x+O(\sqrt{x}),
\]
we conclude that we have
\[
\sum_{\rho=1/2+i\gamma:\atop T\leq \gamma\leq T+1} \frac{1}{|\rho|}\ll \frac{\log T}{T}
\quad\text{and}\;
\sum_{T\leq \sqrt{n}\leq T+1} \frac{r(n)}{n^{3/4}} \ll \frac{1}{T^{1/2}}.
\]
This shows that the function $q$ is (under RH) a $(p,\beta)$-function
for $p\geq 1$ and any $\beta<1$, whereas the function $u$ is a $(p,\beta)$-function
for $p\geq 1$ and $\beta=1/2$.
\end{remark}

\begin{remark}
Notice that if $\phi$ is a $(p,\beta)$-function,
then it is also a $(p',\beta')$-function, for every 
$p'\leq p$ and $\beta'\leq \beta$.
It is thus interesting to find the largest $p$ and $\beta$ such
that $\phi$ is a $(p,\beta)$-function:
the pair $(p,\beta)$ gives, by Theorem \ref{ch0304:thm01},
the largest range for which we can prove finiteness of the moments of $\phi$.
\end{remark}
\begin{remark}
The existence of the second moment
was proved by Akbary et al. \cite[Th. 1.14]{akbary_limiting_2014}
for $(2,\beta)$-functions with $\beta>1/2$,
as a corollary of the fact that such functions
are Besicovitch $B^2$-almost periodic.
Theorem \ref{ch0304:thm01} extends their result
to moments of higher order.
\end{remark}
\begin{remark}
We give an expression for $L_n$ in \eqref{ch0304:eq051},
with notation introduced in section \ref{section02}.
The value of the moments is obtained as the sum of an absolutely
convergent series that involves the data from the sequence of the
frequencies $\lambda_n$ and from the coefficients~$r_n$.
\end{remark}
\begin{remark}
If $\phi$ is a $(p,\beta)$-function for $p$ arbitrarily large
and every $\beta<1$,
then Theorem \ref{ch0304:thm01}
implies that all the moments of $\phi$ exist.
This is the case of the remainder $q(y)$ in the prime number theorem
(under RH).

On the other hand, when $\beta\leq 1/2$, an extra input must be given
in order to show that the second and higher moments exist.
This is what happens for the remainder $u(y)$ in the Gauss circle problem,
where additional properties of the problem are used to prove
the existence of asymptotic variance and moments of order $1\leq n\leq 9$.
Indeed, if $\phi$ is a $(p,\beta)$-function with $\beta\leq 1/2$,
then it does not follow from \cite[Th. 1.2]{akbary_limiting_2014}
that $\phi$ is an almost periodic function, and some extra
information must be used to show if this is the case.

We consider the cases $\beta=1/2$ and $\beta=1$ as \enquote{extremal},
so that Theorem~\ref{ch0304:thm01} covers the intermediate
situation when $\beta$ is some number strictly between $1/2$ and~$1$.
\end{remark}

\begin{remark}
Consider an irreducible unitary cuspidal automorphic representation $\pi$
of $\mathrm{GL}_d(\mathbb{A}_\QQ)$, and let $L(s,\pi)$ be the automorphic
$L$-function attached to $\pi$.

Let $\psi(x,\pi)$ be the prime counting function associated to $L(s,\pi)$,
$M(x,\pi)$ the main term in the asymptotic expansion of $\psi(x,\pi)$,
and denote the remainder by $E(x,\pi)$.
Assuming the corresponding Riemann hypothesis for $L(s,\pi)$, the function
\[e^{-y/2}E(e^y,\pi)\]
has the structure of a $(p,\beta)$-function
for $p\geq 1$ and any $\beta<1$ (see \cite[Proposition~4.2]{akbary_limiting_2014}).
Theorem \ref{ch0304:thm01} shows that $e^{-y/2}E(e^y,\pi)$ admits
finite moments of every order.
The result should be compared with \cite[Cor. 1.15]{akbary_limiting_2014},
where the existence of the variance is proved.
\end{remark}

If we consider a $(p,\beta)$-function $\phi$ such that
$p\geq 2$ and $\beta>1/2$,
it follows from \cite[Th. 1.2]{akbary_limiting_2014}
that $\phi$ admits a limiting distribution.
Our second result concerns the tails of the limiting distribution.

%%%%%%%%%%%%%%%%%%%%%%%%%%%%%%%%%%%%%%%%%%%%%%%%%%%%%%%%%%%%%%%%%%%
%%%   THEOREM ON TAILS OF LIMITING DISTRIBUTION                 %%%
\begin{theorem}\label{ch0303:thm01}
Let $\phi$ be a $(p,\beta)$-function with $p\geq 2$ and $\beta>1/2$.
Then $\phi$ admits a limiting distribution $\mu$ with tails of size
\begin{equation}\label{ch0303:eq019}
    \mu((-\infty,-S]\cup[S,+\infty))\ll S^{-(2\beta-1)/(2-2\beta)}.
\end{equation}
For $\beta=1$ we have exponential decay, that is, there exists a
positive constant $c_\phi>0$ such that
\(\mu((-\infty,-S]\cup[S,+\infty))\ll \exp(-c_\phi S).\)
For $\beta>1$ the measure $\mu$ is compactly supported.
\end{theorem}
%%%                                                             %%%
%%%%%%%%%%%%%%%%%%%%%%%%%%%%%%%%%%%%%%%%%%%%%%%%%%%%%%%%%%%%%%%%%%%

\begin{remark}
The proof of the existence of the measure $\mu$ can be found in several papers
(see e.g.
\cite{heath-brown_distribution_1991,bleher_distribution_1993,rubinstein_chebyshevs_1994,akbary_limiting_2014}).
The estimate on the tails for general $\beta$ does not seem however to appear in these papers.
In section \ref{apf03:section03} we prove Theorem~\ref{ch0303:thm01},
following the argument of \cite[p. 178-181]{rubinstein_chebyshevs_1994},
which easily generalizes to \eqref{ch0303:eq019}.
\end{remark}

\begin{remark}
In the case of the prime number theorem and $L$-functions discussed
in \cite{rubinstein_chebyshevs_1994}, the remainder terms
have an almost periodic expansion with corresponding 
coefficients $r_n$ that satisfy
\begin{equation}\label{ch0303:eq033}
    \sum_{T\leq \lambda_n\leq T+1} |r_n| \ll \frac{\log T}{T}.
\end{equation}
This leads to exponential decay of type $O(\exp(-c\sqrt{\lambda}))$
for the tails of the limiting distributions.
Similarly, a bound in \eqref{ch0303:eq033} of type
$O(T^{-1}\log^{m}(T))$, $m\geq 0$, leads to an upper bound
for the tails of $\mu$ of type $O(\exp(-c\lambda^{1/(m+1)}))$.
\end{remark}

\begin{remark}
Lower bounds for the tails of the limiting distribution $\mu$ of a general
$(p,\beta)$-function $\phi$ can also be proved by similar argument as in \cite{rubinstein_chebyshevs_1994},
but we have decided not to discuss them here.
Moreover, assuming the extra condition that the frequencies $\lambda_n$
are linearly independent, a much stronger decay
on the tails of $\mu$ can be proved, and one can show that the Fourier transform
of $\mu$ can be expressed in terms of Bessel functions
(see \cite[Th. 1.9]{akbary_limiting_2014} and also \cite{montgomery_zeta_1980,montgomery_large_1987,rubinstein_chebyshevs_1994}).
\end{remark}

If a function $\phi$ admits a limiting distribution $\mu$
and finite asymptotic moments, a natural question to ask is whether the moments
of $\phi$ coincide with the moments of the distribution $\mu$.
This can be a nontrivial question, as there exist functions
for which the moments do not agree (for instance if the function has very rare but very large peaks).

Due to the estimate on the tails of the limiting distribution
provided by Theorem \ref{ch0303:thm01}, we can prove the following corollary,
where we show that up to a certain order the moments
of a $(p,\beta)$-function are indeed equal to those of its limiting distribution.

\begin{corollary}\label{apf01:cor01}
Let $\phi$ be a $(p,\beta)$-function with $p\geq 2$ and $\beta>1-1/(2p+4)$.
Then the moments of $\phi$ of order $1\leq n\leq p$ coincide with the moments
of its limiting distribution $\mu$. In other words, for $1\leq n\leq p$ we have
\begin{equation}\label{ch0304:eq069}
    \lim_{Y\to\infty}\frac{1}{Y}\int_0^Y\phi(y)^ndy=\int_\RR x^n d\mu.
\end{equation}
\end{corollary}

%%%%%%%%%%%%%%%%%%%%%%%%%%%%%%%%%%%%%%%%%%%%%%%%%%%%%%%%%%%%%%%%%%%
%%%  SUBSECTION: APPLICATIONS TO THE HYPERBOLIC CIRCLE PROBLEM  %%%

\subsection{Applications to the hyperbolic circle problem}\label{apf01:subsection01}

%%%                                                             %%%
%%%%%%%%%%%%%%%%%%%%%%%%%%%%%%%%%%%%%%%%%%%%%%%%%%%%%%%%%%%%%%%%%%%

In the second part of the paper we apply the above results to
the hyperbolic lattice point counting problem, which is defined as follows.
For $\Gamma\leq\PSL(2,\RR)$ a cofinite Fuchsian group
and $z,w\in\HH$, define the function
\begin{equation}\label{ch02:def:N}
    N(s,z,w)
    =
    \{\gamma\in\Gamma:\;d(z,\gamma w)\leq s\},
\end{equation}
where $d$ is the hyperbolic distance.
The function $N(s,z,w)$ counts the number of translates of the point $w$ by elements
$\gamma$ of the group~$\Gamma$ that fall inside the hyperbolic ball $B(z,s)$
of center $z$ and radius $s$.

Spectral theory of automorphic forms provides the main asymptotic
of $N(s,z,w)$ as $s$ tends to infinity,
as well as a finite number of secondary terms (associated to the small eigenvalues),
that we collect in the \enquote{complete main term} $M(s,z,w)$, see \eqref{ch02:def:mainterm}.
The remainder in the problem is then defined as
\begin{equation}
   E(s,z,w) = N(s,z,w) - M(s,z,w).
\end{equation}

In section \ref{apf_04_section04} we introduce the notation and technical estimates
related to the problem, and we prove an upper bound on the asymptotic variance for the
normalized remainder term $e(s,z,w):=e^{-s/2}E(s,z,w)$ as follows.

%%%%%%%%%%%%%%%%%%%%%%%%%%%%%%%%%%%%%%%%%%%%%%%%%%%%%%%%%%%%%%%%%%%
%%%   THEOREM ON A REFINEMENT OF CHAMIZO's ESTIMATE             %%%
\begin{theorem}\label{ch00:intro:thm01}
Let $\Gamma$ be a cofinite Fuchsian group, let $z,w\in\HH$,
and let $T\gg 1$. Then
\begin{equation}\label{ch00:intro:eq004}
   \int_T^{T+1} |e(s,z,w)|^2 ds \ll T.
\end{equation}
\end{theorem}
%%%                                                             %%%
%%%%%%%%%%%%%%%%%%%%%%%%%%%%%%%%%%%%%%%%%%%%%%%%%%%%%%%%%%%%%%%%%%%

\begin{remark}
Theorem~\ref{ch00:intro:thm01} improves on a result by Chamizo \cite[Cor. 2.1.1]{chamizo_applications_1996}
that corresponds, in our notation, to an upper bound $O(T^2)$ in \eqref{ch00:intro:eq004}.
The method of proof is different: Chamizo's proof uses the large sieve
in Riemann surfaces \cite{chamizo_large_1996}, while we directly integrate in the pretrace formula for $e(s,z,w)$.
This strategy was suggested in \cite[p. 27]{chamizo_topics_1994}.
Moreover, differently from Chamizo, we include the contribution associated to
the eigenvalue $\lambda=1/4$ in the main term $M(s,z,w)$.
\end{remark}
\begin{remark}
The problem of the finiteness of the asymptotic second moment of $e(s,z,z)$
was already discussed by Phillips and Rudnick
\cite{phillips_circle_1994}. Like us, they were not able to show that the second moment
of the normalized remainder is finite. On the other hand, they show that the limit
\[\lim_{T\to\infty}\frac{1}{T}\int_0^T |e(s,z,z)|^2ds\]
is non-zero, and they provide numerics that suggest that the limit should exist.
\end{remark}
\begin{remark}
Theorem \ref{ch00:intro:thm01} shows that the asymptotic variance of $e(s,z,w)$,
if not finite, diverges at most linearly with $T$.
Also, it shows that the function $E(s,z,w)$ is \enquote{on average} bounded
by $O(\sqrt{s}e^{s/2})$, which is consistent with the conjectural bound
\[E(s,z,w)=O\big(e^{s(1/2+\eps)}\big),\quad\forall\;\eps>0.\]
A straightforward consequence of Theorem~\ref{ch00:intro:thm01}
is that for every $\alpha>0$ the set
\begin{equation}\{s\geq 0\;:\; |E(s,z,w)| > s^{1+\alpha}e^{s/2}\}\end{equation}
has finite Lebesgue measure, hence $E(s,z,w)$ violates the conjectural bound at most in a set of finite measure.
\end{remark}

We move then to consider certain integrated versions of $E(s,z,w)$.
We start by defining, for $\Gamma$ cocompact, $z\in\Gamma\backslash\HH$, and $s\geq 0$,
the following functions:
\begin{equation}
\begin{gathered}
    G_1(s,z) := \frac{1}{e^s}\int_{\Gamma\backslash\HH} |E(s,z,w)|^2 d\mu(w),\\
    G_2(s)   := \frac{1}{e^s}\iint_{\Gamma\backslash\HH\times\Gamma\backslash\HH} |E(s,z,w)|^2 d\mu(z) d\mu(w).
\end{gathered}\nonumber
\end{equation}
The functions $G_1,G_2$ are thus defined by integrating away one
(resp. both) space variable.
For $\Gamma$ cocompact and $z\in\Gamma\backslash\HH$, $s\geq 0$,
we define also
\begin{equation}
    G_3(s,z) := \frac{1}{e^{s/2}} \int_0^s E(x,z,z) dx,\nonumber
\end{equation}
which gives the integral of $E(s,z,z)$ in the radial variable.
Applying the results of the first part of the paper we obtain the following theorem.

%%%%%%%%%%%%%%%%%%%%%%%%%%%%%%%%%%%%%%%%%%%%%%
%%%%   THEOREM ON INTEGRATED REMAINDERS  %%%%%

\begin{theorem}\label{ch0305:thm003}
Let $\Gamma$ be cocompact, $z\in\Gamma\backslash\HH$, and let $s\geq 0$.
Then for $i=1,2,3$ the function $G_i$ is bounded in $s$,
admits moments of every order, limiting distribution $\mu_i$ of compact support,
and the moments of $G_i$ coincide with the moments of $\mu_i$.
\end{theorem}

%%%%%                                   %%%%%
%%%%%%%%%%%%%%%%%%%%%%%%%%%%%%%%%%%%%%%%%%%%%

\begin{remark}
Notice that, since the functions $G_1$ and $G_3$ depend on $z$,
the limiting distributions $\mu_1$ and $\mu_3$ will also depend on $z$.
\end{remark}
\begin{remark}
The function $G_1$ was considered by Hill and Parnovski \cite{hill_variance_2005},
who only proved that $G_1$ is bounded.
The integration of both space variables that defines $G_2$
was studied by Wolfe \cite{wolfe_asymptotic_1979}, who again
studied its pointwise behaviour
but did not have the distributional result.
The function $G_3$ was studied by Philllips and Rudnick \cite{phillips_circle_1994}
in relation to the proof that the asymptotic mean of $E(s,z,z)$ vanishes.
They also consider cofinite groups that are not cocompact, and
it is probably possible
to extend the proof for $G_3$ to the general cofinite case,
but we have refrained from doing this here.
\end{remark}
\begin{remark}
Whereas we cannot prove that the remainder $e(s,z,z)$ admits finite variance,
we see that the function $G_3(s,z)$
admits not only finite variance, but finite moments of every order.
In \cite{cherubini_variance_2015} Cherubini and Risager
considered integration of $e(s,z,z)$ to fractional order $0<\alpha<1$,
and showed that the resulting function $e_\alpha(s,z)$,
has finite asymptotic variance for every $0<\alpha<1$.
This means that not a full integration, but integration
to any positive small order suffices to give finite second moment.
From the variance of $e_\alpha(s,z)$ one might then expect to recover,
in the limit as $\alpha$ tends to $0$,
the variance of $e(s,z,z)$, but it seems hard to prove this.
We refer to \cite[\S 8]{cherubini_variance_2015}
for a more detailed discussion on the subject.
\end{remark}

%%%%%%%%%%%%%%%%%%%%%%%%%%%%%%%%%%%%%%%%%%%%%%%%%%%%%%%%%%%%%%%%%%%%%%%%%%%%%%%%%%%%%%%%

\section{Asymptotic moments}\label{section02}

In this section we prove Theorem \ref{ch0304:thm01}.
Let $\phi$ be a $(p,\beta)$-function.
Set $\lambda_{-n}=-\lambda_n$ and $r_{-n}=\overline{r}_n$,
and define the function
\begin{equation}\label{ch0302:eq008}
    S(y,X):=\sum_{n\in\ZZ:\atop|\lambda_n|\leq X}r_ne^{i\lambda_n y},
\end{equation}
so that we have, for $y\geq y_0$ and $X\geq X_0$, the identity
\begin{equation}\label{ch0302:eq009}
    \phi(y)=S(y,X)+\mathcal{E}(y,X).\nonumber
\end{equation}
In view of this relation and of assumption \eqref{ch0301:eq003},
we expect that the moments
of $\phi$ can be computed simply by looking at the moments of
$S(y,X)$.

On taking the $n$-th power we can write
\begin{equation}\label{report:eq001}
S(y,X)^n = \sum_{J\in\ZZ^n:\atop |\lambda_J|\leq X} A(r_J)e^{iy\vartheta(\lambda_J)},
\end{equation}
where $J=(j_1,\ldots,j_n)$ is a multiindex, $\lambda_J=(\lambda_{j_1},\ldots,\lambda_{j_n})$
(similarly $r_J=(r_{j_1},\ldots,r_{j_n})$), $|\lambda_J|\leq X$ means that all entries are smaller than $X$, and
\begin{equation}\label{1707:eq001}
A(r_J)=\prod_{k=1}^n r_{j_k},\quad \vartheta(\lambda_J)=\sum_{k=1}^n \lambda_{j_k}.
\end{equation}

Note that the sum in \eqref{report:eq001} runs over $J\in\ZZ^n$; however, in order to simplify notation, we will omit the condition $J\in\ZZ^n$, and only write $|\lambda_J|\leq X$, throughout the rest of this section.
Setting $X=X(Y)$ and integrating in $y$ we get
\begin{equation}\label{ch0304:eq031}
    \frac{1}{Y}\int_{y_0}^YS(y,X(Y))^n dy
    =
    \sum_{|\lambda_J|\leq X(Y)}
    A(r_J)
    \frac{1}{Y}\int_{y_0}^Y e^{iy\vartheta(\lambda_J)}dy.
\end{equation}
The integral equals $(Y-y_0)$ when $\vartheta(\lambda_J)=0$, and otherwise we can bound
\begin{equation}\label{ch0304:eq032}
    \bigg|\frac{1}{Y}\int_{y_0}^Y e^{iy\vartheta_g(\lambda_J)}dy\bigg|
    \leq
    \frac{4}{1+Y|\vartheta_g(\lambda_J)|}.
\end{equation}
We define the sets (diagonal and off-diagonal)
\begin{equation}\label{ch0304:eq033}
    \begin{aligned}
        \mathscr{D}^Y
        &:=
        \{|\lambda_J|\leq X(Y)\;|\;\vartheta(\lambda_J)=0\}
        \\
        \mathscr{O}^Y
        &:=
        \{|\lambda_J|\leq X(Y)\;|\;\vartheta(\lambda_J)\neq 0\}.
    \end{aligned}\nonumber
\end{equation}
The contribution of the off-diagonal is negligible if we assume that the coefficients $r_n$
decay sufficiently fast, and we have the following.

\begin{prop}\label{ch0304:prop01}
Let $S(y,X)$ be as in \eqref{ch0302:eq008}, and assume that
\eqref{ch0301:eq004} holds with $\beta>1-1/n$. Then the limit
\begin{equation}\label{ch0304:eq050}
    \lim_{Y\to\infty}\frac{1}{Y}\int_{y_0}^Y S(y,X(Y))^n dy
\end{equation}
exists, and is given by
\begin{equation}\label{ch0304:eq051}
    \sum_{\lambda_J\in\mathscr{D}^\infty} A(r_J),
\end{equation}
where $A(r_J)$ is defined in \eqref{1707:eq001}.
The sum in \eqref{ch0304:eq051} is absolutely convergent, as shown at the end of the proof.
\end{prop}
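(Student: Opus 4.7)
The plan is to take the identity \eqref{ch0304:eq031} as the starting point, split the inner sum according to whether $\vartheta_g(\lambda_J)=0$ (the diagonal case $\mathscr{D}_g^Y$) or $\vartheta_g(\lambda_J)\neq 0$ (the off-diagonal case $\mathscr{O}_g^Y$), and show that the diagonal piece converges to \eqref{ch0304:eq051} while the off-diagonal piece vanishes. On the diagonal the time-average $\frac{1}{Y}\int_{y_0}^Y e^{iy\vartheta_g(\lambda_J)}\,dy$ is exactly $(Y-y_0)/Y\to 1$; on the off-diagonal it is controlled by $2/(1+Y|\vartheta_g(\lambda_J)|)$ thanks to \eqref{ch0304:eq032}.

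For the off-diagonal contribution I would invoke Lemma \ref{ch0304:lemma03}. Enlarging the range of summation from $\mathscr{O}_g^Y$ to $\mathscr{O}_g^\infty$ only increases absolute values, and the specialisation of Lemma \ref{ch0304:lemma03} to $Y=1$ provides the summable majorant
$$\sum_{g\in G^n}\sum_{\lambda_J \in \mathscr{O}_g^\infty} \frac{|A_g(r_J)|}{1+|\vartheta_g(\lambda_J)|}<\infty.$$
Each summand tends pointwise to $0$ as $Y\to\infty$ because $\vartheta_g\neq 0$ on $\mathscr{O}_g^\infty$, so dominated convergence forces the entire off-diagonal contribution to vanish.

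For the diagonal contribution I first need the absolute convergence of $\sum_{g}\sum_{\mathscr{D}_g^\infty}|A_g(r_J)|$, which is not directly covered by Lemma \ref{ch0304:lemma03}. I would prove it by an iteration mirroring the proof of that lemma: on the diagonal the last variable $\lambda_{j_n}$ is pinned to $\pm\mu_{n-1}$, and the unit-interval bound \eqref{ch0301:eq004} contributes a single-coefficient factor $\ll\mu_{n-1}^{-\beta}$ (playing exactly the role that Lemma \ref{ch0304:lemma01} with $\alpha=1$ plays in the off-diagonal case, and producing the same residual exponent $\beta$). The remaining $(n-1)$-dimensional sum has precisely the shape appearing in the middle of the proof of Lemma \ref{ch0304:lemma03}, so iterated applications of Lemmas \ref{ch0304:lemma01} and \ref{ch0304:lemma02} under the hypothesis $\beta>1-1/n$ yield a finite bound. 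Once absolute convergence is in place, and since $\mathscr{D}_g^Y\uparrow\mathscr{D}_g^\infty$ as $Y\to\infty$, a further dominated convergence argument together with the factor $(Y-y_0)/Y\to 1$ identifies the limit as \eqref{ch0304:eq051}.

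The main obstacle is precisely the absolute convergence of the diagonal series, which Lemma \ref{ch0304:lemma03} does not address as stated. Once this technical verification is in place --- essentially a re-run of the iteration in the proof of Lemma \ref{ch0304:lemma03}, with the constraint $\vartheta_g=0$ replacing one summation variable by coefficient decay from \eqref{ch0301:eq004} --- everything else reduces to standard interchanges of sum and limit.
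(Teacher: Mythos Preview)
Your proposal is correct and follows essentially the same route as the paper: the diagonal/off-diagonal split of \eqref{ch0304:eq031}, dominated convergence via Lemma~\ref{ch0304:lemma03} for the off-diagonal, and a separate verification of absolute convergence on the diagonal by using the constraint $\vartheta_g(\lambda_J)=0$ together with \eqref{ch0301:eq004} to remove one variable and then iterating Lemmas~\ref{ch0304:lemma01}--\ref{ch0304:lemma02} exactly as in the proof of Lemma~\ref{ch0304:lemma03}. The paper phrases the first diagonal step slightly differently (it eliminates $\lambda_{j_n}$ and writes the resulting weight as $(\mu_{n-2}-\lambda_{j_{n-1}}+B+1)^{-\beta}$ before invoking Lemma~\ref{ch0304:lemma01}), but this is only a notational shift from your $\mu_{n-1}^{-\beta}$ description, and the remaining iteration is identical.
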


\begin{proof}
From \eqref{ch0304:eq031} and \eqref{ch0304:eq032} we can write
\begin{equation}\label{ch0304:eq052}
\begin{split}
    \frac{1}{Y}\int_{y_0}^Y S(y,X(Y))^n dy
    &=
    \sum_{\lambda_J\in \mathscr{D}^Y} A(r_J) \big(1+O(y_0Y^{-1})\big)
    \\
    &+
    O\left(
    \sum_{\lambda_J\in \mathscr{O}^Y}
    \frac{|A(r_J)|}{1+Y|\vartheta(\lambda_J)|}
    \right).
\end{split}
\end{equation}
We bound the off-diagonal first.
Let $\mathscr{O}$ denote the sum on the second line of \eqref{ch0304:eq052}.
If we can show that the sum is bounded for every $Y\geq 1$, then by Lebesgue dominated convergence
we will conclude that the sum vanishes as $Y\to\infty$.
Ignoring the $\lambda_j<1$ and grouping
the terms corresponding to $a_i\leq \lambda_j\leq a_i+1$, $a_i\in\ZZ^+$, because of \eqref{ch0301:eq004} we have
\[
\mathscr{O}
\ll
\sum_{s=0}^n \sum_{k=1}^\infty \sum_{l=1}^\infty \frac{B_s[k]B_{n-s}[l]}{1+|k-l+O(n)|}
\quad
\text{where}
\quad
B_r[m]=\sum_{a_1+\cdots+a_r=m} (a_1\cdots a_r)^{-\beta}.
\]
It can be proved that $B_r[m]\ll_r m^{(1-\beta)r-1}$, for instance by induction on $r$,
\[B_r[m]\ll \sum_{a_r\leq m} B_{r-1}[m] \ll m^{1-\beta} m^{(1-\beta)(r-1)-1} = m^{(1-\beta)r-1}.\]
We have $1+|k-l+O(n)|\gg_n|k-l|$, so
\begin{equation}\label{1707:eq002}
\mathscr{O} \ll \sum_{k\neq l} \frac{1}{k^ul^v|k-l|} \quad \text{with}\quad
\begin{array}{l}
u = (\beta-1)s+1\\
v = (\beta-1)(n-s)+1.
\end{array}
\end{equation}
Since $\beta>1-1/n$, splitting the summation for $1\leq k\leq l/2$ and $l/2<k<l$
(and similarly for $l$ when $l<k$)
we can upper bound \eqref{1707:eq002} by convergent harmonic series $\sum_{k\geq 1} k^{-1-\eps}$
so that we conclude that $\mathscr{O}$ is bounded for every $Y\geq 1$.

Let $\mathscr{D}$ denote the sum appearing in the first line in \eqref{ch0304:eq052}.
We show that this sum is absolutely convergent for every $Y\geq 1$. Indeed, we can bound
\[
|\mathscr{D}|
\ll
\sum_{s=0}^n \sum_{k=1}^\infty B_s[k]B_{n-s}[l]
\]
with $|l-k|\leq n$, and using $B_r[m]\ll m^{(1-\beta)r-1}$ we obtain
\[
|\mathscr{D}| \ll \sum_{k=1}^\infty \frac{1}{k^{u+v}},
\]
which converges since $u+v>1$. For $Y\to\infty$, we see that the sum in \eqref{ch0304:eq051} is absolutely convergent.
\end{proof}

\begin{proof}[Proof of Theorem \ref{ch0304:thm01}.]
Let us prove that for every $1\leq n\leq p$ we have
\begin{equation}\label{ch0304:eq059}
    \lim_{Y\to\infty} \frac{1}{Y} \int_0^Y \phi(y)^n dy
    =
    \lim_{Y\to\infty} \frac{1}{Y} \int_{y_0}^Y S(y,X(Y))^n dy.
\end{equation}
The result will then follow from Proposition \ref{ch0304:prop01}.
Consider the case $n=p$, and assume first that $p$ is even;
from Proposition \ref{ch0304:prop01}
we know that the $p$-th moment of $S$ is finite.
Hence we can write, using H\"older's inequality,
\begin{equation}\label{ch0304:eq060}
\begin{split}
    &\frac{1}{Y}\int_0^Y \phi(y)^p dy
    =
    \frac{1}{Y}\int_{y_0}^Y S(y,X(Y))^p dy
    +
    O\left(
    \frac{1}{Y}\int_0^{y_0} |\phi(y)|^p dy
    \right)
    \\
    +&O\left(
    \sum_{n=0}^{p-1}\frac{1}{Y}
    \left(\int_{y_0}^Y|S(y,X(Y))|^p dy\right)^{\frac{n}{p}}
    \left(\int_{y_0}^Y|\mathcal{E}(y,X(Y))|^p dy\right)^{\frac{p-n}{p}}
    \right).
\end{split}
\end{equation}
Because of \eqref{ch0301:eq003}, the errors in \eqref{ch0304:eq060}
tend to zero as $Y\to\infty$, and we obtain \eqref{ch0304:eq059}.
Assume now that $p$ is odd. Then we can write
\begin{equation}\label{ch0304:eq061}
\begin{split}
    \frac{1}{Y}\int_0^Y \phi(y)^p dy
    &=
    \frac{1}{Y}\int_{y_0}^Y S(y,X(Y))^p dy
    +
    O\left(
    \frac{1}{Y}\int_0^{y_0} |\phi(y)|^p dy
    \right)
    \\
    &+
    O\left(
    \sum_{n=0}^{p-1} \frac{1}{Y}
    \int_{y_0}^Y |S(y,X(Y))|^n |\mathcal{E}(y,X(Y))|^{p-n}dy
    \right).
\end{split}
\end{equation}
If $p=1$ then the error contains only one term, and
this tends to zero as $Y\to\infty$, in view of \eqref{ch0301:eq003},
and we obtain \eqref{ch0304:eq059}.
For $p>1$, we cannot use H\"older inequality exactly in the same way as in \eqref{ch0304:eq061}
because we cannot bound by $O(1)$ the $p$-th moment of $|S|$ (we can do this without absolute value),
and we argue a little different, by bounding $\mathcal{E}$ by its absolute value.
Proposition \ref{ch0304:prop01} gives 
$Y^{-1}\int_{y_0}^Y|S(y,X(Y))|^n dy\ll 1$ for every $1\leq n\leq p-1$,
and condition \eqref{ap:eq005} shows that in the limit as $Y\to\infty$
the parenthesis tends to zero, thus we obtain \eqref{ch0304:eq059}.
For $n\leq p-1$ the same argument as in \eqref{ch0304:eq060}--\eqref{ch0304:eq061} works.
This proves the theorem.
\end{proof}

%%%%%%%%%%%%%%%%%%%%%%%%%%%%%%%%%%%%%%%%%%%%%%%%%%%%%%%%%%%%%%%%%%%%%%%%%%%%%%%%%

\section{Limiting distribution}\label{apf03:section03}

In this section we prove Theorem \ref{ch0303:thm01}.
We start by recalling the definition of limiting distribution.
%%%%%%%%%%%%%%%%%%%%%%%%%%%%%%%%%%%%%%%%%%%%%%%%%%%%%%%%%%%%%%%%%%%
%%%   DEFINITION OF LIMITING DISTRIBUTION                       %%%
\begin{definition}
A limiting distribution for a function $\phi:[0,\infty)\to\RR$ is a
probability measure $\mu$ on $\RR$ such that the limit
\begin{equation}
   \lim_{Y\to\infty} \frac{1}{Y} \int_0^Y g(\phi(y))dy = \int_\RR g d\mu
\end{equation}
holds for every bounded continuous function $g$ on $\RR$.
\end{definition}
%%%                                                             %%%
%%%%%%%%%%%%%%%%%%%%%%%%%%%%%%%%%%%%%%%%%%%%%%%%%%%%%%%%%%%%%%%%%%%

\noindent
We give now two preparatory lemmata.
Let $X>T>2$ and consider the functions
\begin{equation}\label{ch0303:eq020}
    \phi_T(y)=2\Re(\sum_{0<\lambda_n\leq T} r_ne^{i\lambda_n y}),
    \quad
    \psi_T(y,X)=2\Re(\sum_{T<\lambda_n\leq X} r_ne^{i\lambda_n y})+ \mathcal{E}(y,X).\nonumber
\end{equation}

\begin{lemma}
Assume the hypothesis of Theorem \ref{ch0303:thm01}. Then for $Y,T\gg 1$ with $X(Y)\geq T$,
we have
\begin{equation}\label{ch0303:eq021}
    \frac{1}{Y}\int_{y_0}^Y |\psi_T(y,X(Y))|^2dy
    \ll
    \frac{1}{T^{2\beta-1}}.\nonumber
\end{equation}
\end{lemma}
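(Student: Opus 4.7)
The plan is to decompose $\psi_T$ into its trigonometric polynomial part and its error term and control them separately. Write $\psi_T(y,X(Y))=A(y)+B(y)$ with $A(y)=2\Re\bigl(\sum_{T<\lambda_n\le X(Y)}r_n e^{i\lambda_n y}\bigr)$ and $B(y)=\mathcal{E}(y,X(Y))$. The elementary bound $(a+b)^2\le 2(a^2+b^2)$ reduces the problem to estimating $Y^{-1}\int_{y_0}^Y A^2 dy$ and $Y^{-1}\int_{y_0}^Y B^2 dy$ separately. The latter is $o(1)$ as $Y\to\infty$ directly from \eqref{ch0301:eq003} with $p=2$ (allowed since $\phi$ is in particular a $2$-function), hence is dominated by $T^{-(2\beta-1)}$ once $Y$ is taken large enough.

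The main work is the $A^2$ term. Setting $f(y)=\sum_{T<\lambda_n\le X(Y)}r_n e^{i\lambda_n y}$ and expanding $A^2=(f+\bar f)^2=f^2+2|f|^2+\overline{f^2}$, the standard estimate $\bigl|Y^{-1}\int_{y_0}^Y e^{i\omega y}dy\bigr|\le 2/(1+Y|\omega|)$ isolates a diagonal contribution $2\sum_{T<\lambda_n\le X(Y)}|r_n|^2$, coming from the $n=m$ terms of $2|f|^2$, plus off-diagonal pieces of shape $\sum |r_nr_m|/(1+Y|\lambda_n\pm\lambda_m|)$. Decomposing the diagonal into unit intervals and using $\sum_j a_j^2\le(\sum_j a_j)^2$ for non-negative $a_j$ yields $\sum_{k\ge T}\bigl(\sum_{k\le\lambda_n<k+1}|r_n|\bigr)^2\ll\sum_{k\ge T}k^{-2\beta}\ll T^{1-2\beta}$ by \eqref{ch0301:eq004} and the assumption $\beta>1/2$, which is already the desired bound.

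The $\lambda_n+\lambda_m$ off-diagonal piece is harmless: since $\lambda_n+\lambda_m>2T$, bounding $1+Y(\lambda_n+\lambda_m)\ge Y(\lambda_n+\lambda_m)$ and applying Lemma \ref{ch0304:lemma02} twice (with $\alpha=1$) yields $\ll Y^{-1}\log(T)\,T^{1-2\beta}$, which is absorbed into $T^{1-2\beta}$ as soon as $Y\gg\log T$.

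The main obstacle is the $\lambda_n-\lambda_m$ case, because the sequence $\{\lambda_n\}$ need not be well-spaced and the weight $1/(1+Y|\lambda_n-\lambda_m|)$ can be of order one for close frequencies. The plan is to split according to whether $|\lambda_n-\lambda_m|\ge 1$ or $|\lambda_n-\lambda_m|<1$. In the first range $1+Y|\lambda_n-\lambda_m|\ge Y|\lambda_n-\lambda_m|$, and Lemma \ref{ch0304:lemma01} for the inner sum in $\lambda_n$ followed by Lemma \ref{ch0304:lemma02} for the outer sum in $\lambda_m$ (both with $\alpha=1$) again gives a factor $Y^{-1}$, hence $\ll Y^{-1}\log(T)\,T^{1-2\beta}$. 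In the second range the weight is trivially $\le 1$, and the inner sum $\sum_{|\lambda_n-\lambda_m|<1}|r_n|\ll\lambda_m^{-\beta}$ follows by covering the two-unit interval $(\lambda_m-1,\lambda_m+1)$ by two unit intervals and applying \eqref{ch0301:eq004}; coupling with the outer sum over $\lambda_m>T$ and using \eqref{ch0301:eq004} once more produces the same $T^{1-2\beta}$. Collecting all contributions yields \eqref{ch0303:eq021}.
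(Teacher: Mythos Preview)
Your proof is correct and follows essentially the same strategy as the paper's: expand the square of the truncated trigonometric sum, separate diagonal from off-diagonal contributions, control both via the unit-interval bound \eqref{ch0301:eq004}, and absorb the $\mathcal{E}$-term using \eqref{ch0301:eq003}. Your explicit splitting of the off-diagonal into $|\lambda_n-\lambda_m|\ge 1$ and $|\lambda_n-\lambda_m|<1$ makes transparent a step the paper compresses into one line; the only cosmetic point is that the ``second applications'' of Lemma~\ref{ch0304:lemma02} for the outer sums are really just direct unit-interval estimates on $\sum_{\lambda_m>T}|r_m|\log(\lambda_m)\lambda_m^{-\beta}$ rather than instances of that lemma as stated.
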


\begin{proof}
This follows from \cite[Lemma 7.1]{iwaniec_analytic_2004} and \eqref{ch0301:eq004}.
\end{proof}

\begin{lemma}\label{ch0303:lemma00sbo}
For each $T\geq 2$ there exists a probability measure $\nu_T$ on $\RR$
such that
\begin{equation}\label{ch0303:eq024}
    \nu_T(f):=\int_\RR f(x)d\nu_T(x)=\lim_{Y\to\infty}\frac{1}{Y}\int_{y_0}^Y f(\phi_T(y))dy\nonumber
\end{equation}
for every bounded Lipschitz continuous function $f$ on $\RR$.
In addition, there is a constant $c>0$ such that
the support of $\nu_T$ lies in the ball $B(0,cT^{1-\beta})$
for $\beta<1$, respectively $B(0,c\log T)$ for $\beta=1$.
For $\beta>1$ the support of $\nu_T$ is bounded independently of $T$.
\end{lemma}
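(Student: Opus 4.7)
The plan is to view $\phi_T(y)$ as the pullback of a continuous function on a torus via a linear flow, and then invoke Kronecker--Weyl equidistribution. Let $\lambda_{n_1}<\cdots<\lambda_{n_N}$ be the (finitely many) frequencies with $0<\lambda_{n_j}\leq T$, and define $F_T\colon\mathbb{T}^N\to\RR$ by $F_T(\theta_1,\ldots,\theta_N)=2\Re\!\sum_{j=1}^N r_{n_j}e^{i\theta_j}$. Then $\phi_T(y)=F_T(\lambda_{n_1}y,\ldots,\lambda_{n_N}y)$ and $F_T$ is continuous on the compact torus $\mathbb{T}^N$.

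Next I would apply the Kronecker--Weyl equidistribution theorem to the one-parameter subgroup $\Phi(y)=(\lambda_{n_1}y,\ldots,\lambda_{n_N}y)\bmod 2\pi\ZZ^N$. Its orbit closure $L\subseteq\mathbb{T}^N$ is a closed (hence compact) connected subgroup, i.e.\ a subtorus, and for every continuous $G\colon\mathbb{T}^N\to\RR$ one has
\begin{equation*}
\lim_{Y\to\infty}\frac{1}{Y}\int_0^Y G(\Phi(y))\,dy=\int_L G\,dm_L,
\end{equation*}
where $m_L$ is the normalised Haar measure on $L$. Taking $G=f\circ F_T$ for any bounded continuous $f$ (Lipschitz is more than enough), and noting that replacing the lower limit $0$ by $y_0$ contributes only $O(y_0/Y)$, we conclude that
\begin{equation*}
\lim_{Y\to\infty}\frac{1}{Y}\int_{y_0}^Y f(\phi_T(y))\,dy=\int_L (f\circ F_T)\,dm_L=\int_\RR f\,d\nu_T,
\end{equation*}
where $\nu_T:=(F_T|_L)_\ast m_L$ is the pushforward probability measure on $\RR$. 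This produces the required $\nu_T$ and identifies it concretely.

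For the support bound, note that $\mathrm{supp}(\nu_T)\subseteq F_T(L)\subseteq F_T(\mathbb{T}^N)$, and
\begin{equation*}
\sup_{\theta\in\mathbb{T}^N}|F_T(\theta)|\leq 2\sum_{0<\lambda_n\leq T}|r_n|.
\end{equation*}
A unit-interval dyadic decomposition combined with the hypothesis \eqref{ch0301:eq004} yields
\begin{equation*}
\sum_{0<\lambda_n\leq T}|r_n|\ll\sum_{k=1}^{\lfloor T\rfloor+1}k^{-\beta}\ll\begin{cases}T^{1-\beta},&\beta<1,\\ \log T,&\beta=1,\\ 1,&\beta>1,\end{cases}
\end{equation*}
which gives the three claimed support bounds. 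The only mildly delicate point is the invocation of Kronecker--Weyl on the correct subtorus $L$ (rather than on all of $\mathbb{T}^N$), since the $\lambda_{n_j}$ need not be rationally independent; but this is standard and causes no trouble because we only use that the limit exists and is given by a Haar integral over some compact group, which is all we need to define $\nu_T$.
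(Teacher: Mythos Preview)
Your proof is correct and follows essentially the same approach as the paper. The paper simply cites \cite[Lemma~2.3]{rubinstein_chebyshevs_1994} for the existence of $\nu_T$, and that lemma is proved precisely by the Kronecker--Weyl argument you spell out (viewing $\phi_T$ as a continuous function on a torus pulled back along a linear flow, then pushing forward the Haar measure of the orbit closure); your support bound via the pointwise estimate $|\phi_T(y)|\leq 2\sum_{0<\lambda_n\leq T}|r_n|$ and a unit-interval decomposition is also identical to what the paper does.
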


\begin{proof}
The existence of the measure is \cite[Lemma 2.3]{rubinstein_chebyshevs_1994}.
The statement about the support of $\nu_T$ follows from the fact that
\begin{equation}\label{ch0303:eq025}
    |\phi_T(y)|\ll T^{1-\beta}, \quad
    |\phi_T(y)|\ll \log T, \quad\nonumber
\end{equation}
respectively for $\beta<1$, $\beta=1$, and $|\phi_T(y)|\ll 1$ independently of $T$ for $\beta>1$.
\end{proof}

\begin{proof}[Proof of Theorem \ref{ch0303:thm01}.]
We follow closely \cite[179-181]{rubinstein_chebyshevs_1994}.
Consider a bounded Lipschitz continuous function $f$,
with Lipschitz constant $c_f$, so that
\begin{equation}\label{ch0303:eq026}
    |f(x)-f(y)|\leq c_f|x-y|.
\end{equation}
Then we have for $Y\gg 1$
\begin{equation}\label{ch0303:eq027}
    \frac{1}{Y}\int_{y_0}^Y \! \big(f(\phi(y)) \!-\! f(\phi_T(y))\big)dy
    \ll
    \big(\frac{1}{Y}\int_{y_0}^Y \! |\psi_T(y,X(Y))|^2dy\big)^{1/2}
    \!\ll
    \frac{1}{T^{\beta-1/2}},\nonumber
\end{equation}
so that taking the limit as $Y\to\infty$ we obtain
\begin{equation}\label{ch0303:eq028}
\begin{split}
    \nu_T(f)\!-\!O\left(\frac{1}{T^{\beta-1/2}}\right)
    &\!\leq\!
    \liminf_{Y\to\infty}\frac{1}{Y}\int_{y_0}^Y\!\! f(\phi(y))dy
    \\
    &\!\leq\!
    \limsup_{Y\to\infty}\frac{1}{Y}\int_{y_0}^Y\!\! f(\phi(y))dy
    \!\leq\!
    \nu_T(f)\!+\!O\left(\frac{1}{T^{\beta-1/2}}\right).
\end{split}\nonumber
\end{equation}
Since $T$ can be arbitrarily large, we conclude that the $\liminf$ and $\limsup$
coincide, i.e. that
\begin{equation}\label{ch0303:eq029}
    \mu(f):=\lim_{Y\to\infty}\frac{1}{Y}\int_{y_0}^Y f(\phi(y))dy
\end{equation}
exists. Thus there exists a Borel measure $\mu$ on $\RR$ such that
\eqref{ch0303:eq029} holds for all $f$ satisfying \eqref{ch0303:eq026}.
Moreover, for such $f$,
\begin{equation}\label{ch0303:eq030}
    |\mu(f)-\nu_T(f)|\ll \frac{1}{T^{\beta-1/2}}.
\end{equation}
In view of Lemma \ref{ch0303:lemma00sbo} and \eqref{ch0303:eq030} we also have for $\beta<1$
\begin{equation}\label{ch0303:eq031}
    \mu(B_\lambda^c)
    =\nu_T(B_\lambda^c)+O\left(\frac{1}{T^{\beta-1/2}}\right)
    =O\left(\frac{1}{T^{\beta-1/2}}\right)
\end{equation}
for $\lambda=cT^{1-\beta}$
($B_\lambda^c$ is the complement of the open ball of radius $\lambda$).
This leads to
\begin{equation}\label{ch0303:eq032}
    \mu(B_\lambda^c)=O(\lambda^{-(2\beta-1)/(2-2\beta)}).\nonumber
\end{equation}
In the case when $\beta=1$ we insert $\lambda=c\log T$ in \eqref{ch0303:eq031},
which gives $\mu(B_\lambda^c)=O(e^{-\lambda/2c})$.
Finally, for $\beta>1$, the compactness of the support of $\mu$ follows from the fact
that $\phi_T$ is bounded independently of $T$.
\end{proof}

\begin{proof}[Proof of Corollary \ref{apf01:cor01}]
The proof follows the lines of \cite[Lemma 2.5]{fiorilli_elliptic_2014}.
We show that \eqref{ch0304:eq069} holds for $n=p$,
as the case for $n<p$ is similar.
First observe that by Theorem \ref{ch0304:thm01}
we have the bound as $Y\to\infty$
\begin{equation}\label{ch0304:eq077}
    \frac{1}{Y}\int_0^Y |\phi(y)|^p dy \ll 1.
\end{equation}
Consider for $S\gg 1$ the Lipschitz bounded continuous function
\begin{equation}\label{ch0304:eq070}
H_S(x):=
\begin{cases}
0     & \text{if } |x|\leq S,\\
|x|-S & \text{if } S< |x|\leq S+1,\\
1     & \text{if } |x|> S+1.
\end{cases}\nonumber
\end{equation}
By Theorem \ref{ch0303:thm01} we have
\begin{equation}\label{ch0304:eq071}
\lim_{Y\to\infty}\frac{1}{Y}\int_0^Y H_S(\phi(y))dy
=
\int_\RR H_S(x)d\mu
\ll
S^{-(2\beta-1)/(2-2\beta)}.\nonumber
\end{equation}
It follows that
\begin{equation}\label{ch0304:eq072}
\limsup_{Y\to\infty}\frac{1}{Y}\int_{\genfrac{}{}{0pt}{}{0\leq y\leq Y}{|\phi(y)|\geq S+1}} \!\!\!\!\!\! dR
\leq
\limsup_{Y\to\infty}\frac{1}{Y}\int_0^Y \!\!\! H_S(\phi(y))dy
\ll
S^{-(2\beta-1)/(2-2\beta)}.\nonumber
\end{equation}
In view of the bound \eqref{ch0304:eq077} we can write
\begin{equation}\label{ch0304:eq073}
\begin{split}
    \limsup_{Y\to\infty}\frac{1}{Y}\int_{\genfrac{}{}{0pt}{}{0\leq y\leq Y}{|\phi(y)|\geq S}} |\phi(y)|^p dy
    &=
    \limsup_{Y\to\infty}\sum_{\ell=0}^\infty \frac{1}{Y}\int_{\genfrac{}{}{0pt}{}{0\leq y\leq Y}{|\phi(y)|\geq S+\ell}} |\phi(y)|^p dy
    \\
    &\ll
    \sum_{\ell=0}^\infty (S+\ell+1)^p (S+\ell)^{-(2\beta-1)/(2-2\beta)}
    \\
    &\ll
    S^{p+1-(2\beta-1)/(2-2\beta)}.
\end{split}\nonumber
\end{equation}
Here we used \eqref{ch0304:eq077} to interchange the $\limsup$ with the infinite series,
and the fact that $\beta>1-1/(2p+4)$ to estimate the sum of the series.
Define now the bounded Lipschitz continuous function
\begin{equation}\label{ch0304:eq074}
G_S(x):=
\begin{cases}
x^p  & 0\leq x\leq S,\\
S^p(S+1-x) & S<x\leq S+1,\\
0  & x>S+1,
\end{cases}\nonumber
\end{equation}
for $x\geq 0$, and $G_S(-x)=(-1)^p G_S(x)$ for $x<0$.
We obtain
\begin{equation}\label{ch0304:eq075}
\begin{split}
\limsup_{Y\to\infty}\frac{1}{Y}\int_0^Y \phi(y)^pdy
&=
\limsup_{Y\to\infty}\frac{1}{Y}\int_0^Y G_S(\phi(y))dy
\\
&\phantom{xxxxxxxxxx}+ O(S^{p+1-(2\beta-1)/(2-2\beta)})
\\
&=
\int_\RR G_S(x)d\mu + O(S^{p+1-(2\beta-1)/(2-2\beta)})
\\
&=
\int_\RR x^p d\mu + O(S^{p+1-(2\beta-1)/(2-2\beta)}).
\end{split}
\end{equation}
Taking $S\to\infty$ we conclude that
\begin{equation}\label{ch0304:eq076}
    \limsup_{Y\to\infty}\frac{1}{Y}\int_0^Y \phi(y)^pdy=\int_\RR x^p \, d\mu.\nonumber
\end{equation}
A similar argument works for the liminf, and this concludes the proof.
\end{proof}

%%%%%%%%%%%%%%%%%%%%%%%%%%%%%%%%%%%%%%%%%%%%%%%%%%%%%%%%%%%%%%%%%%%%%%%%%%%%%%%%%

\section{Applications to hyperbolic counting}\label{apf_04_section04}

In this section we prove Theorem \ref{ch00:intro:thm01}.
The strategy of proof consists
in applying the pretrace formula \cite[Th. 7.4]{iwaniec_spectral_2002}
to a regularized version of the automorphic kernel
\begin{equation}\label{ch02cs01:eq001}
    N(s,z,w)
    = \sum_{\gamma\in\Gamma} \mathbf{1}_{[0,s]}(d(z,\gamma w))
    = \sum_{\gamma\in\Gamma} \mathbf{1}_{[0,(\cosh s-1)/2]}(u(z,\gamma w)).
\end{equation}
We construct here the smoothing and explain how the \enquote{complete}
main term $M(s,z,w)$ is defined both for the sharp and for the regularized problem
(see \eqref{ch02:def:mainterm} and \eqref{def:Mpm}).

Let $\delta$ be a small positive real number, $0<\delta<1$, and consider the function
\begin{equation}\label{ch02:def:k_delta}
k_\delta(u):=\frac{1}{4\pi\sinh^2(\delta/2)}\mathbf{1}_{[0,(\cosh(\delta)-1)/2]}(u)\nonumber
\end{equation}
where $\mathbf{1}_{[0,A]}$ is the indicator function of the set $[0,A]$.
For $u=u(z,w)$, the function $k_\delta$ is the indicator function of a ball of radius $\delta$
in the hyperbolic plane, normalized so that the ball has unit volume.
In other words, it satisfies
\begin{equation}\label{k_delta_integrates_to_one}
\int_\HH k_\delta(u(z,w))d\mu(z) = 1\nonumber.
\end{equation}
Define $k^\pm(u)$ as the functions given by the convolution product
\begin{equation}\label{def:kpm}
k^\pm(u):=\left(\mathbf{1}_{[0,(\cosh(s\pm\delta)-1)/2]}\,\ast\,k_\delta\right)(u),
\end{equation}
where the hyperbolic convolution of two functions $k_1,k_2$
is defined \cite[(2.11)]{chamizo_applications_1996} as
\begin{equation}\label{1207:eq002}
    \int_\HH k_1(u(z,v))k_2(u(v,w))d\mu(v).
\end{equation}
Notice that the convolution of $k_1$ and $k_2$ in \eqref{1207:eq002}
only depends on the distance between $z$ and $w$, and thus defines
a function of $u=u(z,w)$, as written in compact form in \eqref{def:kpm}.
Because of the triangle inequality $d(z,w)\leq d(z,v)+d(v,w)$,
for $Z\geq 0$ the convolution $\mathbf{1}_{[0,\cosh(Z)-1)/2]}\ast k_\delta$ satisfies
\begin{equation}\label{convolution_values}
(\mathbf{1}_{[0,\cosh(Z)-1)/2]}\ast k_\delta) (u(z,w))
=
\begin{cases}
1 & d(z,w)\leq Z-\delta\\
0 & d(z,w)\geq Z+\delta.
\end{cases}\nonumber
\end{equation}
From this we deduce that
\begin{equation}\label{kpm_ineq}
k^-(u) \leq \mathbf{1}_{[0,(\cosh(s)-1)/2]}(u) \leq k^+(u),\nonumber
\end{equation}
and summing over $\gamma\in\Gamma$:
\begin{equation}\label{Kpm_ineq}
K^-(s,\delta)=\sum_{\gamma\in\Gamma}k^-(u(z,\gamma w))
\leq
N(s)
\leq
\sum_{\gamma\in\Gamma}k^+(u(z,\gamma w))=K^+(s,\delta).
\end{equation}

We want to expand $K^\pm(s,\delta)$ using the pretrace formula.
In order to do this we need to
prove that Selberg--Harish-Chandra transform of $k^\pm$ is an admissible test function.

The Selberg--Harish-Chandra (SHC) transform turns convolutions
into products (see \cite[p. 323]{chamizo_applications_1996}),
so if we denote by $h_s$ the SHC transform of $\mathbf{1}_{[0,(\cosh(s)-1)/2]}$,
and by $h^\pm$ the SHC transform of $k^\pm$, then we have
\begin{equation}\label{h_of_conv}
h^\pm (t) = \frac{1}{4\pi\sinh^2(\delta/2)} h_{s\pm\delta}(t) h_\delta(t).
\end{equation}
Denote for simplicity
\begin{equation}\label{h_tilde}
\tilde{h}_\delta(t)=\frac{1}{4\pi\sinh^2(\delta/2)}h_\delta(t).\nonumber
\end{equation}
The function $h_s(t)$ is explicitely computed in \cite[eq. (2.6)]{chamizo_applications_1996}
and \cite[eq. (2.10)]{phillips_circle_1994}, and is given by
\begin{equation}
    h_s(t)=2^{3/2}\int_{-s}^s (\cosh s-\cosh u)^{1/2} e^{itu}du.\nonumber
\end{equation}
Observe that $h_s(t)$ is a holomorphic function of $t$.
Notice also that for every $t\in\RR$ and $s>0$ we have the estimate
\begin{equation}\label{h_R(0)}
    |h_s(t)|\leq h_s(0)\leq se^{s/2}.
\end{equation}
This will be useful in later estimates for $h(t)$ for $t$ close to $0$.

Lemma 2.4 in \cite{chamizo_applications_1996} shows that
$h_s(t)$ can be expressed in terms of special functions:
for every $s>0$, and every $t\in\CC$ such that $it\not\in\ZZ$, we have
\begin{equation}\label{ch02:h_R_special_functions}
h_s(t)=
2\sqrt{2\pi\sinh s}\;
\Re\left(
e^{its} \frac{\Gamma(it)}{\Gamma(3/2+it)} F\left(-\frac{1}{2};\frac{3}{2};1-it;\frac{1}{(1-e^{2s})}\right)
\right),
\end{equation}
where $F$ is the Gauss hypergeometric function.
Looking at the series expansion of~$F$ we can write,
for $t\in\RR$ and $s>\frac{1}{2}\log 2$,
\begin{equation}\label{ch02:F_expansion}
F\left(-\frac{1}{2};\frac{3}{2};1-it;\frac{1}{(1-e^{2s})}\right)
=
1+O\left(e^{-2s}\min\left\{1,\frac{1}{|t|}\right\}\right).
\end{equation}
Inserting \eqref{ch02:F_expansion} in \eqref{ch02:h_R_special_functions},
using the Taylor expansion of the hyperbolic sine,
and Stirling's formula to estimate the quotient of Gamma functions, we obtain
the simpler expression
\begin{equation}\label{0707:eq001}
    h_s(t)
    =
    2\sqrt{\pi} e^{s/2} \Re\left(e^{its}\frac{\Gamma(it)}{\Gamma(3/2+it)}\right)
    +
    O\left(\frac{e^{-3s/2}}{|t|(1+|t|^{1/2})}\right).
\end{equation}
We are also interested in purely imaginary values of $t$ in the interval $[-i/2,i/2]$.
From \cite[Lemma 2.4]{chamizo_applications_1996}
and \cite[Lemma 2.1]{phillips_circle_1994} we have, for $s\geq 1$ and $t$ purely imaginary,
\begin{equation}\label{ch02:hR_t_imag}
h_s(t)=
\sqrt{2\pi\sinh s} e^{s|t|} \frac{\Gamma(|t|)}{\Gamma(3/2+|t|)}
+O\left(\big(1+|t|^{-1}\big) e^{s\left(\frac{1}{2}-|t|\right)}\right).
\end{equation}
For $0\leq s\leq 1$ and $t\in\CC$ we can write instead (see \cite[Lemma 2.4 (c)]{chamizo_applications_1996})
\begin{equation}\label{ch02:smallR}
h_s(t)=
2\pi s^2 \frac{J_1(st)}{st} \sqrt{\frac{\sinh s}{s}} + O\left(s^2 e^{s|\Im t|}\min\{s^2,|t|^{-2}\}\right),
\end{equation}
where $J_1(z)$ is the $J$-Bessel function of order $1$.

%%%%%%%%%%%%%%%%%%%%%%%%%%%%%%%%%%%%%%%%%%%%%%%%%%%%%%%%%%%%%%%

\subsection{Contribution from the small eigenvalues}\label{ch02:subsubsection03}

In the pretrace formula we split the spectral expansion
into the contribution associated to the small eigenvalues
and that associated to the rest of the spectrum.
We compute now, using the expressions given above for $h_s(t)$,
the contribution of the small eigenvalues
(we also include the case $\lambda=1/4$).
We discuss first the contribution coming from the discrete spectrum,
and then the contribution coming from the continuous spectrum at $\lambda=1/4$.

For the eigenvalue $\lambda_0=0$, i.e. $t_0=i/2$, we have a simple formula for $h_s(i/2)$, namely
\begin{equation}\label{lambda0}
h_s(i/2)
= 2\pi(\cosh s -1).\nonumber
\end{equation}
This means that we can compute $h^\pm(i/2)$ directly and obtain
\begin{equation}\label{hpm_iover2}
h^\pm(i/2)
=
\frac{1}{4\pi\sinh^2(\delta/2)} h_{s\pm\delta}(i/2) h_\delta(i/2)
=
2\pi(\cosh s -1) +O(\delta e^s).
\end{equation}
For $\lambda=1/4$ we have again a simple expression for $h_s(0)$
(see \cite[Lemma 2.2]{phillips_circle_1994}):
\begin{equation}\label{hR0}
h_s(0)=4\big(s+2(\log 2-1)\big)e^{s/2}+O(e^{-s/2}).
\end{equation}
Using this and \eqref{ch02:smallR} we can compute the value $h^\pm(0)$.
First observe that the function $J_1(z)$ verifies
\begin{equation}\label{limJ1}
\lim_{z\to 0} \frac{J_1(z)}{z}=\frac{1}{2}.
\end{equation}
Using the Taylor expansion of the hyperbolic sine for $\delta\ll 1$, we get,
in the limit as $t\to 0$ in \eqref{ch02:smallR},
\begin{equation}\label{htilde0}
\tilde{h}_\delta(0)
=
\frac{1}{4\pi\sinh^2(\delta/2)} h_\delta(0)
=
1+O(\delta^2).
\end{equation}
Combining \eqref{hR0}, \eqref{limJ1}, and \eqref{htilde0}, we obtain
\begin{equation}\label{hpm0}
h^\pm(0)
\!=\!
h_{s\pm\delta}(0) \tilde{h}_\delta(0)
\!=\!
4\big(s+2(\log 2-1)\big)e^{s/2} \!+\! O(s\,\delta\,e^{s/2} + e^{-s/2}).
\end{equation}
We analyze now the contribution coming from the small eigenvalues $0<\lambda_j<1/4$.
These eigenvalues correspond to $t_j$ chosen so that $t_j/i\in(0,1/2)$. It is important
to recall that there is only a finite number of such eigenvalues, which implies
that there exists $0<\eps_\Gamma\leq 1/4$ such that $t_j/i\in(\eps_\Gamma,1/2-\eps_\Gamma)$.
For our analysis we make use of equations \eqref{ch02:hR_t_imag} and \eqref{ch02:smallR}. 
We can write for $t=t_j$ purely imaginary corresponding to a small eigenvalue:
\begin{align}
h^\pm(t)
&=
\left(\!
\sqrt{2\pi\sinh(s\pm\delta)} e^{(s\pm\delta)|t|} \frac{\Gamma(|t|)}{\Gamma(3/2+|t|)}
\!+\!
O\left(\!(1\!+\!|t|^{-1})e^{(1/2-|t|)(s\pm\delta)}\!\right)
\!\right)
\nonumber
\\
&\times
\left(
\frac{2\pi\delta^2}{4\pi\sinh^2(\delta/2)} \frac{J_1(\delta t)}{\delta t} \sqrt{\frac{\sinh\delta}{\delta}}
+
O\left(\frac{\delta^2e^{\delta|t|}\min\{\delta^2,|t|^{-2}\}}{\sinh^2(\delta/2)}\right)
\right).\label{hpm_small}
\end{align}
Using Taylor approximations in the above formula, for $\delta\ll 1$ and $t=t_j$,
we can rewrite \eqref{hpm_small} in the more comfortable way
\begin{equation}\label{hpm_small2}
\begin{gathered}
h^\pm(t)
=
\bigg[
\frac{\Gamma(|t|)}{\Gamma(3/2+|t|)}
\left( \sqrt{\pi}e^{s/2} + O(\delta e^{s/2} + e^{-3s/2}) \right)
\left( e^{s|t|} + O(\delta e^{s|t|} \right)
\\
+
O\left(e^{s(1/2-\eps_\Gamma)}\right)
\bigg]
\bigg[
1 + O(\delta + \delta^2 e^{\delta|t|})
\bigg].
\end{gathered}\nonumber
\end{equation}
Expanding the product we obtain that the contribution from a given small eigenvalue
$0<\lambda_j<1/4$ is given by
\begin{equation}\label{hpm_small_final}
h^\pm(t_j)
=
\sqrt{\pi} \frac{\Gamma(|t_j|)}{\Gamma(3/2+|t_j|)} e^{s(1/2+|t_j|)}
+
O(\delta e^{s(1-\eps_\Gamma)} + e^{s(1/2-\eps_\Gamma)}).
\end{equation}
Finally we discuss the contribution coming from the Eisenstein series at $\lambda=1/4$.
By this contribution we mean the expression
\begin{equation}
\frac{1}{4\pi} \sum_{\mathfrak{a}} E_\mathfrak{a}(z,1/2)\overline{E_\mathfrak{a}(w,1/2}) \; \int_{-\infty}^{+\infty} h^\pm(t)  \; dt,\nonumber
\end{equation}
where the sum runs over the cusps of $\Gamma\backslash\HH$.
Consider now the integral
\begin{equation}\label{continuous1}
\begin{aligned}
\int_{-\infty}^{+\infty} h^\pm(t)dt
=
\int_{-\infty}^{+\infty} h_{s\pm\delta}(t)dt
+
\int_{-\infty}^{+\infty} O(|h_{s\pm\delta}(t)(\tilde{h}_\delta(t)-1)|)dt.
\end{aligned}
\end{equation}
Using \eqref{ch02:smallR} and \eqref{limJ1} we get, for $\delta\ll 1$ and $t\in\RR$,
\begin{equation}\label{ch02:htilde_estimate}
\tilde{h}_\delta(t)
=
\begin{cases}
1 + O(\delta|t|+\delta^2) & \delta|t|<1\\
O\left(\frac{1}{(\delta|t|)^{3/2}}\right) & \delta|t|\geq 1.
\end{cases}
\end{equation}
By \eqref{h_R(0)}, \eqref{ch02:h_R_special_functions}, and \eqref{ch02:F_expansion},
we have instead
\begin{equation}\label{hRbounds}
h_{s\pm\delta}(t)=O\left( e^{s/2}\;\min\left\{s,\frac{1}{|t|},\frac{1}{|t|^{3/2}}\right\} \right).
\end{equation}
Inserting this in the second integral in \eqref{continuous1},
using \eqref{ch02:htilde_estimate},
and splitting the integral to optimize the above estimate, we obtain for $s\geq 1$
\begin{equation}
\begin{split}
    \int_{-\infty}^{+\infty} \!\!\! |h_{s\pm\delta}(t)(\tilde{h}_\delta(t)-1)|dt
    \ll
    s\,\delta^{1/2}e^{s/2}\rule{0pt}{15pt}.\nonumber
\end{split}
\end{equation}
Hence we can write
\begin{equation}\label{continuous2}
\int_{-\infty}^{+\infty} h^\pm(t)dt
=
\int_{-\infty}^{+\infty} h_{s\pm\delta}(t)dt
+
O(s\delta^{1/2} e^{s/2}).\nonumber
\end{equation}
The average of the function $h_s(t)$ can be computed via the Fourier inversion theorem, giving
\begin{equation}
\int_\RR h_s(t)dt
=
2\pi g_s(0),\nonumber
\end{equation}
where $g_s(u)$ is the Fourier inverse of $h_s(t)$ and is given by (see \cite[eq. (2.9)]{phillips_circle_1994})
\begin{equation}\label{g_function}
g_s(u)=
\begin{cases}
2^{3/2}(\cosh s-\cosh u)^{1/2} & |u|\leq s\\
0 & \text{otherwise.}
\end{cases}\nonumber
\end{equation}
For $s\pm\delta$ and $u=0$ we obtain
\begin{equation}\label{gR0}
\begin{aligned}
2\pi g_{s\pm\delta}(0)
= 2^{5/2}\pi (\cosh(s\pm\delta)-1)^{1/2}
= 4\pi e^{s/2} + O(\delta e^{s/2} + e^{-s/2}).
\end{aligned}\nonumber
\end{equation}
This shows that we can write
\begin{equation}\label{hpm_continuous_final}
\int_\RR h^\pm(t)dt
=
4\pi e^{s/2} + O(s\delta^{1/2} e^{s/2} + e^{-s/2}).
\end{equation}

%%%%%%%%%%%%%%%%%%%%%%%%%%%%%%%%%%%%%%%%%%%%%%%%%
%%%   DEFINITION OF THE COMPLETE MAIN TERM    %%%

\subsubsection{Definition of the main term.}
We define the complete main term associated to the hyperbolic circle problem as
\begin{equation}\label{ch02:def:mainterm}
\begin{gathered}
M(s,z,w)
:=
\frac{\pi e^s}{\mathrm{vol}(\Gamma\backslash\HH)}
+
\sqrt{\pi} \sum_{t_j\in(0,\frac{i}{2})} \frac{\Gamma(|t_j|)}{\Gamma(3/2+|t_j|)} e^{s(1/2+|t_j|)} \phi_j(z)\overline{\phi_j(w})
\\
+
4\big(s+2(\log 2-1)\big)\,e^{s/2}\,\sum_{t_j=0} \phi_j(z)\overline{\phi_j(w})
+
e^{s/2}\,\sum_{\mathfrak{a}} E_\mathfrak{a}(z,1/2)\overline{E_\mathfrak{a}(w,1/2}).
\end{gathered}
\end{equation}
Denote by $M^\pm(s,\delta)$ the main term associated to $K^\pm(s,\delta)$,
defined as the contribution coming from the eigenvalues $\lambda_j\leq 1/4$
(together with the contribution from the continuous spectrum at $\lambda=1/4$)
for the kernels $K^\pm$. In other words,
\begin{equation}\label{def:Mpm}
\begin{aligned}
M^\pm(s,\delta)
&:=
\sum_{t_j\in[0,\frac{i}{2}]} h^\pm(t_j)\phi_j(z)\overline{\phi_j(w})
\\
&+
\frac{1}{4\pi}\sum_\mathfrak{a} E_\mathfrak{a}(z,1/2)\overline{E_\mathfrak{a}(w,1/2}) \int_\RR h^\pm(t)dt.
\end{aligned}
\end{equation}
Then we can summarize equations \eqref{hpm_iover2}, \eqref{hpm0}, \eqref{hpm_small_final}, and \eqref{hpm_continuous_final} by saying that
\begin{equation}\label{Mpm_vs_M}
M^\pm(s,\delta,z,w)=M(s)+O\left(\delta e^s + s\delta^{1/2} e^{s/2} + e^{s(1/2-\eps_\Gamma)}\right)
\end{equation}
(in order to simplify notation we have omitted the $z,w$ dependence of $M$ and $M^\pm$).

%%%                                                              %%%
%%%%%%%%%%%%%%%%%%%%%%%%%%%%%%%%%%%%%%%%%%%%%%%%%%%%%%%%%%%%%%%%%%%%

\subsection{Proof of Theorem \ref{ch00:intro:thm01}}
From the inequality \eqref{Kpm_ineq} we have
\begin{equation}\label{cha:NtoKpm}
    |E(s,z,w)|\leq \max\{\,|K^-(s,\delta)-M(s)|,|K^+(s,\delta)-M(s)|\,\}.\nonumber
\end{equation}
Squaring and integrating we see that in order to prove Theorem \ref{ch00:intro:thm01}
it suffices to prove that for some $\delta=\delta(T)$ we have
\begin{equation}\label{cha:kpmtobound}
    \int_T^{T+1} \left| \frac{K^\pm(s,\delta)-M(s)}{e^{s/2}} \right|^2 ds \ll T.
\end{equation}
Using \eqref{Mpm_vs_M} we have
\begin{equation}\label{Kpm_minus_M_vs_Mpm}
    K^\pm(s,\delta)\!-\!M(s)
    =
    K^\pm(s,\delta)\!-\!M^\pm(s)
    \!+\!
    O\left(\delta e^s \!+\! s \delta^{1/2}e^{s/2} \!+\! e^{s(1/2-\eps_\Gamma)}\right).
\end{equation}
We will show later that the choice $\delta(T)=e^{-T/2}$ is admissible.
This leads to
\begin{equation}\label{cha:HF}
    \int_T^{T+1} \! \left|\frac{K^\pm(s,\delta)-M(s)}{e^{s/2}}\right|^2 \! ds
    =
    H(T) + O\big(H(T)^{1/2} + 1\big),
\end{equation}
where
\begin{equation}\label{cha:HFdef}
H(T):=\int_T^{T+1} \left|\frac{K^\pm(s,\delta)-M^\pm(s)}{e^{s/2}}\right|^2 ds.
\end{equation}
The problem reduces thus to show that $H(T)\ll T$, which we can prove using the pretrace formula.
Before proceeding to do so, though, we insert a weight function
in \eqref{cha:HFdef}, which turns out useful for multiple integration by parts.
Let $\psi(s)\in C_c^\infty(\RR)$, with $\mathrm{supp}(\psi)\subseteq [-1/2,3/2]$,
such that $0\leq\psi\leq 1$ and $\psi(s)=1$ for $s\in[0,1]$.
Define, for $T\geq 0$, $\psi_T(s)=\psi(s-T)$. Then we have the inequality
\begin{equation}\label{0707:eq002}
    H(T)\leq \int_\RR \left|\frac{K^\pm(s,\delta)-M^\pm(s)}{e^{s/2}}\right|^2 \, \psi_T(s) \, ds,
\end{equation}
and we give bounds on this last integral.
The pretrace formula applied to the function $K^\pm(s,\delta)$ gives
\begin{equation}\label{real_contribution}
    K^\pm(s,\delta)\!-\!M^\pm(s,\delta)
    \!=\!
    \sum_{t_j>0} h^\pm(t_j) \phi_j(z)\overline{\phi_j(w})
    +
    \frac{1}{4\pi} \sum_\mathfrak{a} \int_{-\infty}^{+\infty}\!\!\! h^\pm(t) E_\mathfrak{a}(t) dt,
\end{equation}
where in order to shorten the notation we have written
\[
E_\mathfrak{a}(t)
=
E_\mathfrak{a}(z,1/2\!+\!it)\overline{E_\mathfrak{a}(w,1/2\!+\!it})\!-\!E_\mathfrak{a}(z,1/2)\overline{E_\mathfrak{a}(w,1/2}).
\]
The series and the integral in \eqref{real_contribution}
are absolutely convergent, as for $\delta>0$ we have
$h^\pm(t)\ll t^{-3}$ (this follows from the definition \eqref{h_of_conv} of $h^\pm(t)$ and
the estimates \eqref{ch02:htilde_estimate} and \eqref{hRbounds}).
We also use, for fixed $z\in\HH$, the following standard inequality (for a proof see \cite[\S 13.2]{iwaniec_spectral_2002}):
\begin{equation}\label{crucial_ineq}
\sum_{T\leq t_j\leq T+1} |\phi_j(z)|^2
+
\frac{1}{4\pi}\sum_\mathfrak{a}\int_T^{T+1} |E_\mathfrak{a}(z,1/2+it)|^2 dt
\ll T.
\end{equation}
In order to give upper bounds on the second moment of $K^\pm(s,\delta)-M^\pm(s,\delta)$,
it suffices to estimate separately the square of the series and the square of the integrals
in \eqref{real_contribution}. We prove a lemma that is useful for this purpose.

\begin{lemma}\label{cha:bounds_on_h}
Let $T\gg 1$ and $0<\delta\ll 1$. Let $t_1,t_2\in\RR$, $t_1,t_2\neq 0$. Then
\begin{equation}\label{cha:lemma:eq}
\begin{aligned}
    \int_\RR \! \frac{h_{s\pm\delta}(t_1)\overline{h_{s\pm\delta}(t_2})}{e^s} \psi_T(s) ds
    \!\ll\!
    \frac{g(t_1)g(t_2)}{1+|t_1-t_2|^2}
    \!+\!
    \frac{g(t_1)g(t_2)}{1+|t_1+t_2|^2}
    \!+\!
    \frac{g(t_1)g(t_2)}{e^{2T}}
\end{aligned}
\end{equation}
where $g(t)=(|t|(1+\sqrt{|t|}))^{-1}$.
\end{lemma}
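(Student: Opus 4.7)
The plan is to substitute the asymptotic expansion \eqref{0707:eq001} into the left hand side of \eqref{cha:lemma:eq} and evaluate the resulting $s$-integrals. Setting $A(t):=\Gamma(it)/\Gamma(3/2+it)$, Stirling's formula for $|t|\gg 1$ together with the Laurent expansion of $\Gamma(it)$ near the origin give $|A(t)| \ll g(t)$, and \eqref{0707:eq001} can be rewritten as
\begin{equation}
    h_R(t) = \sqrt{\pi}\, e^{R/2}\bigl(e^{itR}A(t) + e^{-itR}\overline{A(t)}\bigr) + O\bigl(e^{-3R/2}\,g(t)\bigr).\nonumber
\end{equation}

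First I would substitute this with $R=s\pm\delta$ into each of the two factors and divide by $e^s$. The two leading parts produce four oscillating contributions of the form
\begin{equation}
    C_{\epsilon_1,\epsilon_2}(t_1,t_2)\, e^{i(\epsilon_1 t_1 + \epsilon_2 t_2)s},\qquad \epsilon_1,\epsilon_2 \in \{+1,-1\},\nonumber
\end{equation}
with $|C_{\epsilon_1,\epsilon_2}(t_1,t_2)| \ll g(t_1)g(t_2)$; the factors $e^{\pm\delta/2}$ and $e^{\pm it_j\delta}$ arising from the shift by $\delta$ have modulus $1+O(\delta)$ uniformly in $\delta \ll 1$, and are absorbed into the constants. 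Each resulting $s$-integral is controlled by
\begin{equation}
    \Bigl|\int_\RR e^{i\xi s}\psi_T(s)\,ds\Bigr| \ll \min\{1,\,\xi^{-2}\} \ll \frac{1}{1+\xi^2},\nonumber
\end{equation}
which follows from two integrations by parts together with the uniform bounds $\|\psi_T^{(k)}\|_1 \ll_k 1$ coming from $\psi_T(s)=\psi(s-T)$. Substituting $\xi=\epsilon_1 t_1+\epsilon_2 t_2$ and collapsing the four sign combinations into the two genuine phases $\pm(t_1-t_2)$ and $\pm(t_1+t_2)$ yields the first two terms on the right hand side of \eqref{cha:lemma:eq}.

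Next I would handle the cross terms in which at least one main factor is replaced by the error $O(e^{-3s/2}g(t_j))$. A main factor times an error, divided by $e^s$, has integrand of absolute size $O(e^{-2s}g(t_1)g(t_2))$; since $\psi_T$ is supported in $[T-1/2,T+3/2]$, this integrates trivially to $O(e^{-2T}g(t_1)g(t_2))$, contributing the third term in \eqref{cha:lemma:eq}. The error-times-error contribution is $O(e^{-4T}g(t_1)g(t_2))$ and is absorbed.

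The main obstacle is purely bookkeeping: one must verify that the $\delta$-dependent factors are uniformly bounded so the estimate does not degrade as $\delta\to 0$, and check that the four sign combinations $\epsilon_1t_1+\epsilon_2t_2$ produce exactly the two stated phases after pairing. The substantive cancellation is that the amplitude $e^{R/2}$ of the main term of $h_R(t)$ combines in the product to $e^s$ and exactly cancels the denominator, reducing the lemma to the standard Fourier-integral estimate displayed above.
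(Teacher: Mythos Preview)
Your proposal is correct and follows essentially the same approach as the paper: substitute the expansion \eqref{0707:eq001}, observe that the product of the main terms divided by $e^s$ yields oscillations $e^{i(s\pm\delta)(t_1\pm t_2)}$ with amplitudes bounded by $g(t_1)g(t_2)$, then estimate the $s$-integral either trivially or by integration by parts, while the cross and error--error terms contribute the $e^{-2T}$ piece. The only cosmetic difference is that the paper packages the four sign combinations as two real parts $\Re(e^{i(s\pm\delta)(t_1\mp t_2)}G(t_1)\overline{G(t_2)})$ and $\Re(e^{i(s\pm\delta)(t_1+t_2)}G(t_1)G(t_2))$, whereas you list all four $\epsilon_1,\epsilon_2$ explicitly before pairing.
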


\begin{proof}
From \eqref{0707:eq001} we can write
\begin{equation}
\begin{aligned}
    \frac{h_{s\pm\delta}(t_1)\overline{h_{s\pm\delta}(t_2})}{e^s}
    &=
    \pi e^{\pm\delta}
    \Re\left(e^{i(s\pm\delta)(t_1-t_2)}G(t_1)\overline{G(t_2)}\right)
    \\
    &+
    \pi e^{\pm\delta}
    \Re\left(e^{i(s\pm\delta)(t_1+t_2)}G(t_1)G(t_2)\right)
    +
    O(g(t_1)g(t_2)e^{-2s}),
\end{aligned}
\end{equation}
where $G(t)=\Gamma(it)/\Gamma(3/2+it)$, and $G(t)\ll g(t)$.
After multiplying by $\psi_T(s)$, the integral over $s$ can be
estimated by taking the absolute value inside the integral, or
integrating by parts and then bounding the result. The two different bounds
lead to \eqref{cha:lemma:eq}.
\end{proof}

\subsubsection{Discrete spectrum}
Consider the series in \eqref{real_contribution}.
Taking absolute value and squaring gives a double sum, which we then
multiply by $\psi_T(s)$ and integrate over~$s$.
For simplicity we assume that $z=w$, but the same argument works for $z\neq w$.
We claim that the following holds:
\begin{equation}\label{cha:prop:discrete_eq}
    \sum_{t_j,t_{\ell}>0}
    |\phi_j(z)|^2|\phi_{\ell}(z)|^2
    \int_\RR \frac{ h^\pm(t_j) \overline{ h^\pm(t_{\ell}}) }{e^s} \psi_T(s) ds
    \ll
    \log(\delta^{-1})+\delta^{-1}e^{-2T}+1.
\end{equation}
By symmetry of the estimate \eqref{cha:lemma:eq} in Lemma \ref{cha:bounds_on_h},
and positivity of the integral in \eqref{cha:prop:discrete_eq} for $t_j=t_\ell$,
it is sufficient to consider only the case when $t_{\ell}\geq t_j$.

We follow here a classical argument to analyse the sum (see e.g. \cite{cramer_mittelwertsatz_1922,cramer_uber_1922}).
We consider the part of the series in \eqref{cha:prop:discrete_eq} where $t_j$ and $t_\ell$
are close to each other.
Using \eqref{ch02:htilde_estimate}, \eqref{crucial_ineq}, and Lemma \ref{cha:bounds_on_h},
we have the following estimates:
\begin{equation}\label{cha:disc1}
\begin{aligned}
    &
    \sum_{\genfrac{}{}{0pt}{}{t_j>0}{t_j\leq t_{\ell}< t_j+1}}
    |\phi_j(z)|^2 |\phi_{\ell}(z)|^2 \int_\RR \frac{ h^\pm(t_j) \overline{ h^\pm(t_{\ell}}) }{e^s} \psi_T(s) ds
    \\
    &\ll
    \sum_{\genfrac{}{}{0pt}{}{t_j<\delta^{-1}}{t_j\leq t_{\ell}< t_j+1}}
    \frac{|\phi_j(z)|^2 |\phi_{\ell}(z)|^2}{(t_j t_{\ell})^{3/2}}
    +\!\!\!\!
    \sum_{\genfrac{}{}{0pt}{}{t_j\geq\delta^{-1}}{t_j\leq t_{\ell}< t_j+1}}
    \frac{|\phi_j(z)|^2 |\phi_{\ell}(z)|^2}{(\delta t_j t_{\ell})^3}
    \\
    &\ll
    \sum_{t_j<\delta^{-1}} \frac{|\phi_j(z)|^2}{t_j^2}
    +
    \frac{1}{\delta^3} \sum_{t_j\geq\delta^{-1}} \frac{|\phi_j(z)|^2}{t_j^5} \rule{0pt}{18pt}
    \ll
    \log\delta^{-1}+1.
\end{aligned}
\end{equation}
This shows that a neighbourhood of the diagonal $t_j=t_{\ell}$ (of width 1)
gives a contribution of the order $O(\log\delta^{-1}+1)$.
Similarly we can estimate
\begin{align}
    \sum_{\genfrac{}{}{0pt}{}{t_j\geq\delta^{-1}}{t_{\ell}\geq t_j+1}}
    &
    |\phi_j(z)|^2|\phi_{\ell}(z)|^2 \int_\RR \frac{ h^\pm(t_j) \overline{ h^\pm(t_{\ell}}) }{e^s} \psi_T(s) ds
    \nonumber\\
    &\ll
    \frac{1}{\delta^3} \sum_{t_j\geq\delta^{-1}}
    \frac{|\phi_j(z)|^2}{t_j^3}  \sum_{t_{\ell}\geq t_j+1} 
    \frac{|\phi_{\ell}(z)|^2}{t_{\ell}^3}\left(\frac{1}{|t_{\ell}-t_j|^2}+e^{-2T}\right),
    \nonumber
\end{align}
and using now a unit intervals decomposition we obtain the bound
\begin{align}
    &\quad\ll
    \frac{1}{\delta^3} \sum_{t_j\geq\delta^{-1}} \frac{|\phi_j(z)|^2}{t_j^3} 
    \left(
    \sum_{n=1}^\infty \sum_{n\leq t_\ell-t_j\leq n+1}
    \frac{|\phi_{\ell}(z)|^2}{t_{\ell}^3\,|t_{\ell}-t_j|^2}
    +
    \frac{e^{-2T}}{t_j}
    \right)
    \nonumber\\
    &\quad\ll
    \frac{1}{\delta^3} \sum_{t_j\geq\delta^{-1}} \frac{|\phi_j(z)|^2}{t_j^3} 
    \left(
    \sum_{n=1}^\infty \frac{1}{n^2(t_j+n)^2}
    +
    \frac{e^{-2T}}{t_j}
    \right)
    \label{cha:disc2}\\
    &\quad\ll
    \frac{1}{\delta^3} \sum_{t_j\geq\delta^{-1}} \frac{|\phi_j(z)|^2}{t_j^5}  + \frac{|\phi_j(z)|^2}{t_j^4}e^{-2T}
    \ll
    1+\delta^{-1}e^{-2T}.\nonumber
\end{align}
This shows that the \enquote{tail} of the double series in \eqref{cha:prop:discrete_eq}
gives a contribution of the order $O(1+\delta^{-1}e^{-2T})$.
Finally, the sum for $t_j<\delta^{-1}$ and $t_\ell$ large can be bounded as follows:
\begin{equation}\label{cha:disc3}
\begin{aligned}
    \sum_{\genfrac{}{}{0pt}{}{t_j<\delta^{-1}}{t_{\ell}\geq t_j+1}}
    &
    |\phi_j(z)|^2|\phi_{\ell}(z)|^2 \int_\RR \frac{ h^\pm(t_j) \overline{ h^\pm(t_{\ell}}) }{e^s} \psi_T(s) ds
    \\
    &
    \ll
    \sum_{t_j<\delta^{-1}} \frac{|\phi_j(z)|^2}{t_j^{3/2}}
    \left(
    \sum_{t_{\ell}\geq t_j+1} \frac{|\phi_{\ell}(z)|^2}{t_{\ell}^3/2\,|t_{\ell}-t_j|^2}
    +\right.
    \\
    &
    \qquad+
    \left.
    \sum_{t_j+1\leq t_{\ell}\leq \delta^{-1}} \frac{|\phi_{\ell}(z)|^2}{t_{\ell}^{3/2}} e^{-2T}
    +
    \frac{1}{\delta^{3/2}}
    \sum_{t_{\ell}\geq \delta^{-1}} \frac{|\phi_{\ell}(z)|^2}{t_{\ell}^3} e^{-2T}
    \right),
\end{aligned}\nonumber
\end{equation}
and again a unit intervals decomposition gives
\begin{equation}
\begin{aligned}
    &\ll
    \sum_{t_j<\delta^{-1}} \frac{|\phi_j(z)|^2}{t_j^{3/2}}
    \left(
    \sum_{n=1}^{\infty} \frac{1}{n^2(t_j+n)^{1/2}}
    +
    \delta^{-1/2}e^{-2T}
    \right)
    \\
    &\ll
    \sum_{t_j<\delta^{-1}} \frac{|\phi_j(z)|^2}{t_j^2}
    +
    \delta^{-1}e^{-2T}
    \ll
    \log \delta^{-1} + \delta^{-1}e^{-2T}.
\end{aligned}
\end{equation}
Adding together \eqref{cha:disc1}, \eqref{cha:disc2}, and \eqref{cha:disc3},
we conclude that \eqref{cha:prop:discrete_eq} holds, as claimed.

\subsubsection{Continuous spectrum}
Consider, for a fixed cusp $\mathfrak{a}$, the associated integral in \eqref{real_contribution}.
As in the case of the discrete spectrum, we take absolute value,
square, multiply by $\psi_T(s)$, and then integrate over $s$.
Again we assume that $z=w$, but the same argument works for $z\neq w$.
We claim that the following holds:
\begin{equation}\label{cha:anticipation2}
    \int_\RR \; \Big| \int_{-\infty}^{+\infty} \frac{h^\pm(t)}{e^{s/2}} E_\mathfrak{a}(t) dt \,\Big|^2 \psi_T(s) ds
    \ll
    \log\delta^{-1} + 1 +\delta^{-1}e^{-2T}.
\end{equation}
Since both $h^\pm(t)$ and $E_\mathfrak{a}(t)$ are even in $t$,
we can restrict the domain of integration
(for the inner integral) to the positive real axis $[0,+\infty)$.
So we need to prove
\begin{equation}\label{cha:cont_start2}
    \int_\RR \int_0^\infty \!\!\! \int_0^\infty \!\!\! E_\mathfrak{a}(t_1)E_\mathfrak{a}(t_2)
    \frac{h^\pm(t_1)\overline{h^\pm(t_2})}{e^s} \psi_T(s) dt_1 dt_2 ds
    \ll
    \log\delta^{-1} \!+ 1 + \delta^{-1}e^{-2T}\!.
\end{equation}
Since the integrals are absolutely convergent, we can interchange the order of integration.
Moreover, by symmetry and positivity of the integral for $t_1=t_2$,
it suffices to discuss the integral over $t_2\geq t_1$.

The analysis is similar to the case of the discrete spectrum,
except for $t_1,t_2$ close to zero, when we exploit the
fact that the Eisenstein series is regular at $1/2$ to bound $E_\mathfrak{a}(t)=O(t)$.
Using this together with \eqref{ch02:htilde_estimate}, \eqref{crucial_ineq}, and Lemma \ref{cha:bounds_on_h},
we can bound a unit neighbourhood of the diagonal $t_1=t_2$ by
\begin{equation}\label{0910:eq001}
    \int_0^\infty \int_{t_1}^{t_1+1} E_\mathfrak{a}(t_1) E_\mathfrak{a}(t_2)
    \int_\RR \frac{h^\pm(t_1)\overline{h^\pm(t_2})}{e^s} \psi_T(s) ds \, dt_2 \, dt_1
    \ll
    \log(\delta^{-1}) + 1.
\end{equation}
The tail of the integral can be bounded by
\begin{equation}\label{0910:eq002}
    \int_{\delta^{-1}}^{+\infty} \!\! \int_{t_1+1}^{+\infty} E_\mathfrak{a}(t_1) E_\mathfrak{a}(t_2) \!
    \int_\RR \frac{h^\pm(t_1)\overline{h^\pm(t_2})}{e^s} \psi_T(s) ds \, dt_2 \, dt_1
    \ll
    \delta^{-1}e^{-2T}+1.
\end{equation}
Finally, the range with $t_1<\delta^{-1}$ and $t_2$ large is bounded similarly by
\begin{equation}\label{0910:eq003}
\ll \log(\delta^{-1})+\delta^{-1}e^{-2T}+1.
\end{equation}
Summing \eqref{0910:eq001}, \eqref{0910:eq002}, and \eqref{0910:eq003},
we conclude that \eqref{cha:anticipation2} holds.
Inserting \eqref{cha:prop:discrete_eq} and \eqref{cha:anticipation2} into \eqref{0707:eq002},
and choosing $\delta=e^{-T/2}$, we see that we have proved the bound
\[ H(T)\ll T, \]
as we wanted. This proves Theorem \ref{ch00:intro:thm01}.

%%%%%%%%%%%%%%%%%%%%%%%%%%%%%%%%%%%%%%%%%%%%%%%%%%%%%%%%%%%%%%%%%%%%%%%%%%%%%%%%%

\section{Integrated remainders}

In this section we prove Theorem \ref{ch0305:thm003}.
We start by noting that for $\Gamma$ cocompact
the function $N(s,z,w)$ is uniformly bounded in $z,w$
and hence square-integrable
(see e.g.
\cite[Thm. 6.1]{selberg_harmonic_1955})
and \cite[p. 278]{chavel_eigenvalues_1984}).
By Parseval's theorem we get the expansion
\begin{equation}
    G_1(s,z)
    =
    \sum_{t_j>0} \frac{h_s(t_j)^2}{e^s} |\phi_j(z)|^2
    +
    \sum_{t_j\in[0,i/2]} f_s(t_j)^2 |\phi_j(z)|^2,\nonumber
\end{equation}
where $f_s(t_j)$ is defined (recall \eqref{ch02:def:mainterm}) for $t_j\in[0,i/2]$ by
\[
\begin{split}
    f_s(i/2)
    &=
    \frac{1}{e^{s/2}}\left(h_s(i/2)-\frac{\pi e^s}{\vol(\Gamma\backslash\HH)}\right),
    \\
    f_s(t_j)
    &=
    \frac{1}{e^{s/2}}\left(h_s(t_j)-\sqrt{\pi} \frac{\Gamma(|t_j|)}{\Gamma(3/2+|t_j|)} e^{s(1/2+|t_j|)}\right),\;\;t_j\in(0,i/2),
    \\
    f_s(0)
    &=
    \frac{1}{e^{s/2}}\left(h_s(0)-4\big(s+2(\log 2-1)\big)e^{s/2}\right),
\end{split}
\]
and it satisfies $f_s(t_j)=O(e^{-\eps_\Gamma s})$ for every $t_j\in[0,i/2]$, for some $\eps_\Gamma>0$
(this follows from the discussion on the small eigenvalues in section \ref{ch02:subsubsection03}).
We can therefore write
\begin{equation}\label{ch0305:eq011}
    G_1(s,z)
    =
    C_1
    \!+\!
    2 \Re \left(
    \sum_{0<t_j\leq X} \!\! \frac{\pi\,\Gamma(it_j)^2|\phi_j(z)|^2}{\Gamma(3/2+it_j)^2} e^{2it_js}
    \right)
    \!+\!
    O\left(e^{-\eps_\Gamma s} \!+\! \frac{1}{X}\right)
\end{equation}
with
\begin{equation}
    C_1 = 2\pi \sum_{t_j>0} \frac{|\Gamma(it_j)|^2}{|\Gamma(3/2+it_j)|^2}\,|\phi_j(z)|^2.\nonumber
\end{equation}
The coefficients in \eqref{ch0305:eq011} satisfy (by \eqref{0707:eq001} and \eqref{crucial_ineq})
\begin{equation}
    \sum_{T\leq 2t_j \leq T+1} \frac{|\Gamma(it_j)|^2}{|\Gamma(3/2+it_j)|^2}|\phi_j(z)|^2
    \ll
    \frac{1}{T^2}.\nonumber
\end{equation}
In particular this means that $G_1(s,z)$ is bounded in $s$, because
\begin{equation}
    |G_1(s,z)| \ll C_1 + \sum_{t_j>0} \frac{|\phi_j(z)|^2}{t_j^3} + e^{-\eps_\Gamma s} \ll_z 1.\nonumber
\end{equation}
The function $G_1-C_1$
is of the form \eqref{0308:eq001}, satisfies \eqref{ch0301:eq004} with $\beta=2$,
and, choosing $X(Y)=e^Y$, it satisfies \eqref{ch0301:eq003} for every $p\geq 1$.
Hence we can apply Theorem \ref{ch0304:thm01} and infer the existence of all the moments,
and Theorem \ref{ch0303:thm01} to infer the existence of a limiting distribution $\tilde\mu_1$.
By Corollary \ref{apf01:cor01},
the moments of $G_1-C_1$ coincide with the moments of $\tilde\mu_1$.
Shifting $G_1-C_1$ and the measure $\tilde\mu_1$ by adding back $C_1$
we obtain the Theorem for $G_1$.

Consider now the function $G_2(s)$. In this case we have the expansion
\begin{equation}
    G_2(s)
    =
    \sum_{t_j>0} \frac{h_s(t_j)^2}{e^s}
    +
    \sum_{t_j\in[0,i/2]} f_s(t_j)^2,\nonumber
\end{equation}
and therefore we can write
\begin{equation}
    G_2(s)
    =
    C_2
    +
    2 \Re \left(
    \sum_{0<t_j\leq X} \frac{\pi\,\Gamma(it_j)^2}{\Gamma(3/2+it_j)^2} e^{2it_js}
    \right)
    +
    O\left(e^{-\eps_\Gamma s} + \frac{1}{X}\right)\nonumber
\end{equation}
with
\begin{equation}
    C_2
    =
    2\pi \sum_{t_j>0} \frac{|\Gamma(it_j)|^2}{|\Gamma(3/2+it_j)|^2}.\nonumber
\end{equation}
Using this time the estimate \cite[Th. 7.3]{venkov_spectral_1990}
on the distribution of the eigenvalues
\begin{equation}
    \sum_{T\leq 2t_j\leq T+1} 1 \ll T\nonumber
\end{equation}
we can write
\begin{equation}
    \sum_{T\leq 2t_j \leq T+1} \frac{\Gamma(it_j)^2}{\Gamma(3/2+it_j)^2}
    \ll
    \frac{1}{T^2}\nonumber
\end{equation}
and choosing again $X=e^Y$ we see that $G_2-C_2$ is of the form \eqref{0308:eq001},
satisfies \eqref{ch0301:eq004} with $\beta=2$, and  \eqref{ch0301:eq003} for every $p\geq 1$.
Applying Theorem \ref{ch0304:thm01}, Theorem \ref{ch0303:thm01}, and Corollary \ref{apf01:cor01},
we conclude the proof for $G_2$.

Finally consider the function $G_3(s,z)$.
The function $e^{-s/2}\int_0^sN(x,z,z)dx$ 
is an automorphic kernel associated to the function
$k_s^*(u)=e^{-s/2}\int_0^s k_x(u)dx$, where $k_x(u)=\mathbf{1}_{[0,(\cosh x-1)/2)]}(u)$.
The Selberg--Harish-Chandra transform $h_s^*$ of $k_s^*$ is given by
\[h_s^*(t) = \frac{1}{e^{s/2}}\int_0^s h_x(t)dx\]
and can be analysed with analogous computations to those of section \ref{apf_04_section04}.
We claim that $h^*$ is an admissible function in the pretrace formula.
First observe that for the small eigenvalues we can write
\begin{equation}
    h_s^*(i/2)
    =
    \frac{1}{e^{s/2}} \int_0^s (\pi e^x + O(1))dx
    =
    \pi e^{s/2} + O(se^{-s/2}).\nonumber
\end{equation}
Similarly we have
\begin{equation}
    h_s^*(0)
    =
    8(s+2(\log 2-1)) + O(e^{-s/2}),\nonumber
\end{equation}
and, for $t_j\in(0,i/2)$,
\begin{equation}
    h_s^*(t_j)
    =
    \sqrt{\pi}\frac{\Gamma(|t_j|)}{(1/2+|t_j|)\Gamma(3/2+|t_j|)}e^{s|t_j|}
    + O(e^{-\eps_\Gamma s})\nonumber
\end{equation}
for some $0<\eps_\Gamma<1/4$.
For $t_j$ real and positive we use a representation for $h_x(t)$ that is
more suitable for integration in $x$. The expression can be found in
the proof of \cite[Lemma 2.5]{phillips_circle_1994}, and gives
\begin{equation}
    h_x(t_j) = 2\Re(I(x,t_j)e^{x(1/2+it_j)}),\nonumber
\end{equation}
where
\begin{equation}
    I(x,t_j)
    =
    -2i \int_0^\infty (1-e^{iv})^{1/2} (1-e^{-2x-iv})^{1/2} e^{-t_j v} dv.\nonumber
\end{equation}
Integrating in $x$ we can write
\begin{equation}
    \int_0^s h_x(t_j)dx
    =
    2\Re\Big(\int_0^sI(x,t_j)e^{x(1/2+it_j)}dx\Big).\nonumber
\end{equation}
Moving the contour of integration to two vertical lines above $0$ and $s$
in the complex plane, we obtain
\begin{align}
    &\int_0^s
    I(x,t_j)e^{x(1/2+it_j)}dx\nonumber
    \\
    &=
    2\int_0^\infty\!\!\!\!\int_0^\infty
    (1-e^{iv})^{1/2} (1-e^{-2i\lambda-iv})^{1/2} e^{-t_j v} e^{i\lambda/2-\lambda t_j} dv d\lambda\nonumber
    \\
    &-2 e^{s(1/2+it_j)}\int_0^\infty\!\!\!\!\int_0^\infty
    (1-e^{iv})^{1/2} (1-e^{-2s-2i\lambda-iv})^{1/2} e^{-t_j v} e^{i\lambda/2-\lambda t_j} dv d\lambda.\nonumber
\end{align}
Isolating the oscillation $e^{ist_j}$, and using the bound $|1-e^{iv}|\ll \min(1,v)$
to bound the other terms, we conclude that
\begin{equation}
    h_s^*(t_j)
    =
    2\Re\big(A(t_j)e^{ist_j}\big) + O\left(\frac{1}{t_j^{5/2}}\right),\nonumber
\end{equation}
with
\begin{equation}
    A(t_j)
    =
    -2\int_0^\infty\!\!\!\!\int_0^\infty
    (1-e^{iv})^{1/2} e^{-t_j v} e^{i\lambda/2-\lambda t_j} dv d\lambda
    =
    \frac{i\sqrt{\pi}\,\Gamma(it_j)}{(1/2-it)\Gamma(3/2+it_j)}.\nonumber
\end{equation}
Since $A(t_j)\ll t_j^{-5/2}$, we infer that $h_s^*$ is an admissible test function in
the pretrace formula.
Observing moreover that the main terms
that appear in the integration of the small eigenvalues
correspond to the integration of the terms defining $M(s,z,z)$,
we can write for $X\gg 1$
\begin{equation}
    G_3(s,z)
    =
    2\Re\Big(\sum_{0<t_j\leq X} A(t_j)|\phi_j(z)|^2 e^{ist_j}\Big)
    +
    O\left(e^{-\eps_\Gamma s} + \frac{1}{X^{1/2}}\right).\nonumber
\end{equation}
This shows that $G_3(s,z)$ is of the form \eqref{0308:eq001},
and its coefficients $A(t_j)|\phi_j(z)|^2$ satisfy \eqref{ch0301:eq004}
with $\beta=3/2$. Choosing $X=e^Y$,
$G_3(s,z)$ satisfies \eqref{ch0301:eq003} for every $p\geq 1$.
Applying Theorem \ref{ch0304:thm01}, Theorem \ref{ch0303:thm01}, and Corollary \ref{apf01:cor01},
we conclude the proof for~$G_3$.

%%%%%%%%%%%%%%%%%%%%%%%%%%%%%%%%%%%%%%%%%%%%%%%%%%%%%%%%%%%%%%%%%%%%%%%%%%%%%%%%%

\section*{Acknowledgments}
I would like to thank Morten S. Risager and Yiannis Petridis for their precious comments
on earlier versions of the paper. Thanks also go to the anonymus referee, whose comments helped
to improve significantly the exposition of the paper.
This work was supported by a Sapere Aude grant from The Danish Council for Independent Research (Grant-id:0602-02161B).

%%%%%%%%%%%%%%%%%%%%%%%%%%%%%%%%%%%%%%%%%%%%%%%%%%%%%%%%%%%%%%%%%%%%%%%%%%%%%%%%%

\end{document}